\begin{document}
\title[\hfilneg \hfil Uniqueness of Meromorphic Functions With Respect To Their Shifts Concerning  Derivatives]
{Uniqueness of Meromorphic Functions With Respect To Their Shifts Concerning  Derivatives}

\author[XiaoHuang Huang \hfil \hfilneg]
{XiaoHuang Huang}

\address{XiaoHuang Huang: Corresponding author\newline
Department of Mathematics, Shenzhen University, Shenzhen 518055, China}
\email{1838394005@qq.com}

%\thanks{Submitted September 1, 2017. Published January 21, 2018.}
\subjclass[2010]{30D35, 39A46}
\keywords{Meromorphic functions; shifts; derivatives; small functions}

\begin{abstract}
  An example in the article shows that the first derivative of $f(z)=\frac{2}{1-e^{-2z}}$ sharing $0$ CM and $1,\infty$ IM with its shift $\pi i$ cannot obtain they are equal. In this paper, we study the uniqueness of meromorphic function  sharing small functions with their shifts concerning its $k-th$ derivatives. We use a different method from Qi and Yang  \cite {qy} to improves entire function to meromorphic function, the first derivative to the $k-th$ derivatives, and also finite values to small functions. As for $k=0$, we obtain: Let $f(z)$ be a  transcendental meromorphic function of $\rho_{2}(f)<1$, let $c$ be a nonzero finite value, and let $a(z)\not\equiv\infty, b(z)\not\equiv\infty\in \hat{S}(f)$ be two distinct small functions of  $f(z)$ such that $a(z)$ is a periodic function with period $c$ and $b(z)$ is any small function of $f(z)$. If $f(z)$ and $f(z+c)$ share $a(z),\infty$ CM, and share $b(z)$ IM, then either $f(z)\equiv f(z+c)$ or $$e^{p(z)}\equiv \frac{f(z+c)-a(z+c)}{f(z)-a(z)}\equiv \frac{b(z+c)-a(z+c)}{b(z)-a(z)},$$
where $p(z)$ is a non-constant entire function of $\rho(p)<1$ such that $e^{p(z+c)}\equiv e^{p(z)}$.
\end{abstract}

\maketitle
\numberwithin{equation}{section}
\newtheorem{theorem}{Theorem}[section]
\newtheorem{lemma}[theorem]{Lemma}
\newtheorem{remark}[theorem]{Remark}
\newtheorem{corollary}[theorem]{Corollary}
\newtheorem{example}[theorem]{Example}
\newtheorem{problem}[theorem]{Problem}
\allowdisplaybreaks

\section{Introduction and main results}

Throughout this paper, we assume that the reader have a knowledge of  the fundamental results and the standard notations of the Nevanlinna value distribution theory. See(\cite{h3,y1,y2}). In the following, a meromorphic function $f$ means meromorphic in the whole complex plane. Define
 $$\rho(f)=\varliminf_{r\rightarrow\infty}\frac{log^{+}T(r,f)}{logr},$$
 $$\rho_{2}(f)=\varlimsup_{r\rightarrow\infty}\frac{log^{+}log^{+}T(r,f)}{logr}$$
by the order  and the hyper-order  of $f$, respectively. When $\rho(f)<\infty$, we say $f$ is of finite order.

By $S(r,f)$, we denote any quantity satisfying $S(r, f) = o(T(r, f))$, as $r\to \infty $ outside of a possible exceptional set of finite logarithmic measure. A meromorphic function $a(z)$ satisfying $T(r,a)=S(r,f)$ is called a small function of $f$.  We denote $S(f)$ as the family of all small meromorphic functions of $f$ which includes the constants in $\mathbb{C}$. Moreover, we define $\hat{S}(f)=S(f)\cup\{\infty\}$. We say that two non-constant meromorphic functions $f$ and $g$ share small function $a$ CM(IM) if $f-a$ and $g-a$ have the same zeros counting multiplicities (ignoring multiplicities). Moreover, we introduce the following notation: $S_{(m,n)}(a)=\{z|z $ is a common zero of $f(z+c)-a(z)$ and $f(z)-a(z)$ with multiplicities $m$ and $n$ respectively$\}$. $\overline{N}_{(m,n)}(r,\frac{1}{f-a})$ denotes the counting function of $f$ with respect to the set $S_{(m,n)}(a)$. $\overline{N}_{n)}(r,\frac{1}{f-a})$ denotes the counting function of all distinct zeros of $f-a$ with multiplicities at most $n$. $\overline{N}_{(n}(r,\frac{1}{f-a})$ denotes the counting function of all zeros of $f-a$ with multiplicities at least $n$.

We say that two non-constant meromorphic functions $f$ and $g$ share small function $a$ CM(IM)almost if
 $$N(r,\frac{1}{f-a})+N(r,\frac{1}{g-a})-2N(r,f=a=g)=S(r,f)+S(r,g),$$
or
$$\overline{N}(r,\frac{1}{f-a})+\overline{N}(r,\frac{1}{g-a})-2\overline{N}(r,f=a=g)=S(r,f)+S(r,g),$$
respectively.

For a meromorphic function $f(z)$, we denote its shift by $f_{c}(z)=f(z + c)$.

Rubel and Yang \cite{ruy}  studied the uniqueness of an entire function concerning its first order derivative, and proved the following result.

\

{\bf Theorem A} \ Let $f(z)$ be a non-constant entire function, and let $a, b$ be two finite distinct complex values. If $f(z)$ and $f'(z)$
 share $a, b$ CM, then $f(z)\equiv f'(z)$.

Zheng and Wang \cite{zw} improved Theorem A and proved

\

{\bf Theorem B} \ Let $f(z)$ be a non-constant entire function, and let $a(z)\not\equiv\infty, b(z)\not\equiv\infty$ be two distinct small functions of $f(z)$. If $f(z)$ and $f^{(k)}(z)$ share $a(z), b(z)$ CM, then $f(z)\equiv f^{(k)}(z)$.

Li  and Yang  \cite {ly3} improved Theorem B and proved

\

{\bf Theorem C} \ Let $f(z)$ be a non-constant entire function, and let $a(z)\not\equiv\infty, b(z)\not\equiv\infty$ be two  distinct small functions of $f(z)$. If $f(z)$ and $f^{(k)}(z)$
 share $a(z)$ CM, and share $b(z)$ IM. Then $f(z)\equiv f^{(k)}(z)$.

Recently, the value distribution of meromorphic functions concerning difference analogue has become a popular research, see [1-9,  12-14, 16-18].
 Heittokangas et al \cite{hkl} obtained a similar result analogue of Theorem A concerning shifts.

\

{\bf Theorem D}
 Let $f(z)$ be a non-constant entire function of finite order, let $c$ be a nonzero finite complex value, and let $a, b$ be two finite distinct complex values.
If $f(z)$ and $f(z+c)$ share $a, b$ CM, then $f(z)\equiv f(z+c).$

In \cite {qly}, Qi-Li-Yang investigated the value sharing problem with respect to $f'(z)$ and $f(z+c)$. They proved

\

{\bf Theorem E}
 Let $f(z)$ be a non-constant entire function of finite order, and let $a, c$ be two nonzero finite complex values.
If $f'(z)$ and $f(z+c)$ share $0, a$ CM, then $f'(z)\equiv f(z+c).$

Recently, Qi and Yang \cite {qy} improved Theorem E and proved

\

{\bf Theorem F}
 Let $f(z)$ be a non-constant entire function of finite order, and let $a, c$ be two nonzero finite complex value.
If $f'(z)$ and $f(z+c)$ share $0$ CM and $a$ IM, then $f'(z)\equiv f(z+c).$

Of above theorem, it's naturally to ask whether the condition $0,a$ can be replaced by two distinct small functions, and $f'$ can be replaced by $f^{(k)}$?

In this article, we give a positive answer. In fact, we prove the following more general result.

\

{\bf Theorem 1} Let $f(z)$ be a  transcendental meromorphic function of $\rho_{2}(f)<1$, let $c$ be a nonzero finite value, $k$ be a positive integer, and let $a(z)\not\equiv\infty, b(z)\not\equiv\infty\in \hat{S}(f)$ be two distinct small functions. If $f^{(k)}(z)$ and $f(z+c)$ share $a(z),\infty$ CM, and share $b(z)$ IM, then $f^{(k)}(z)\equiv f(z+c)$.

\

{\bf Example 1} \cite{hf}
Let $f(z)=\frac{2}{1-e^{-2z}}$, and let $c=\pi i$. Then $f'(z)$ and $f(z+c)$ share $0$ CM and share $1,\infty$ IM, but $f'(z)\not\equiv f(z+c)$.

This example shows that for meromorphic functions, the conclusion of Theorem 1 doesn't hold even when  sharing $\infty$ CM is replaced by sharing $\infty$ IM when $k=1$. We believe there are examples for any $k$, but we can not construct them.

As for $k=0$, Li and Yi \cite{ly} obtained

\

{\bf Theorem G} Let $f(z)$ be a  transcendental entire function of $\rho_{2}(f)<1$, let $c$ be a nonzero finite value,  and let $a(z)\not\equiv\infty, b(z)\not\equiv\infty\in \hat{S}(f)$ be two distinct small functions. If $f(z)$ and $f(z+c)$ share $a(z)$ CM, and share $b(z)$ IM, then $f(z)\equiv f(z+c)$.

\

{\bf Remark 1} Theorem G holds when $f(z)$ is a non-constant meromorphic function of $\rho_{2}(f)<1$ such that $N(r,f)=S(r,f)$.

Heittokangas, et. \cite{hkl1} proved.

\

{\bf Theorem H} Let $f(z)$ be a  non-constant meromorphic function of finite order, let $c$ be a nonzero finite value, and let $a(z)\not\equiv\infty$, $b(z)\not\equiv\infty$  and $d(z)\not\equiv\infty\in \hat{S}(f)$ be three distinct small functions  such that $a(z)$, $b(z)$ and $d(z)$ are periodic functions with period $c$. If $f(z)$ and $f(z+c)$ share $a(z),b(z)$ CM, and $d(z)$ IM, then $f(z)\equiv f(z+c)$.

We can ask a question that whether the small periodic function $d(z)$ of $f(z)$ can be replaced by any small function of $f(z)$?\\

In this paper, we obtain our second result.

\

{\bf Theorem 2} Let $f(z)$ be a  transcendental meromorphic function of $\rho_{2}(f)<1$, let $c$ be a nonzero finite value, and let $a(z)\not\equiv\infty, b(z)\not\equiv\infty\in \hat{S}(f)$ be two distinct small functions of  $f(z)$ such that $a(z)$ is a periodic function with period $c$ and $b(z)$ is a small function of $f(z)$. If $f(z)$ and $f(z+c)$ share $a(z),\infty$ CM, and share $b(z)$ IM, then either $f(z)\equiv f(z+c)$ or $$e^{p(z)}\equiv \frac{f(z+c)-a(z+c)}{f(z)-a(z)}\equiv \frac{b(z+c)-a(z+c)}{b(z)-a(z)},$$
where $p(z)$ is a non-constant entire function of $\rho(p)<1$ such that $e^{p(z+c)}\equiv e^{p(z)}$.

We can obtain the following corollary from the proof of Theorem 2.

\

{\bf Corollary 1}  Under the same condition as in Theorem 2, then $f(z)\equiv f(z+c)$ holds if one of conditions satisfies\\
(i) $b(z)$ is a periodic function with period $nc$ ;\\
(ii) $\rho(b(z))<\rho(e^{p(z)})$;\\
(iii) $\rho(b(z))<1$.

\

{\bf Example 2} \ Let $f(z)=\frac{e^{z}}{1-e^{-2z}}$, and let $c=\pi i$. Then $f(z+c)=\frac{-e^{z}}{1-e^{-2z}}$, and $f(z)$ and $f(z+c)$ share $0,\infty$ CM, but $f(z)\not\equiv f(z+c)$.

\

{\bf Example 3} \ Let $f(z)=e^{z}$, and let $c=\pi i$. Then $f(z+c)=-e^{z}$, and $f(z)$ and $f(z+c)$ share $0,\infty$ CM, $f(z)$ and $f(z+c)$ attain
different values everywhere in the complex plane, but $f(z)\not\equiv f(z+c)$.

Above two examples of  show that "2CM+1IM" is necessary.

\

{\bf Example 4} Let $f(z)=e^{e^{z}}$,  then $f(z+\pi i)=\frac{1}{e^{e^{z}}}$. It is easy to verify that $f(z)$ and $f(z+\pi i)$ share $0, 1, \infty$ CM, but $f(z)=\frac{1}{f(z+\pi i)}$. On the other hand, we  obtain $f(z)=f(z+2\pi i)$.

Example 4 tells us that if we drop the assumption  $\rho_{2}(f)<1$, we can get another relation.

By Theorem 1 and Theorem 2, we still believe the latter situation of Theorem 2 can be removed, that is to say,  only the case $f(z)\equiv f(z+c)$ occurs. So we raise a conjecture here.

\

{\bf Conjecture} Under the same condition as in Theorem 2, is  $f(z)\equiv f(z+c)$ ?

\section{Some Lemmas}
\begin{lemma}\label{21l}\cite{h3} Let $f$ be a non-constant meromorphic function of $\rho_{2}(f)<1$,  and let $c$ be a non-zero complex number. Then
$$m(r,\frac{f(z+c)}{f(z)})=S(r, f),$$
for all r outside of a possible exceptional set E with finite logarithmic measure.
\end{lemma}

\begin{lemma}\label{22l}\cite{hk3,y1,y2} Let $f_{1}$ and $f_{2}$ be two non-constant meromorphic functions in $|z|<\infty$, then
$$N(r,f_{1}f_{2})-N(r,\frac{1}{f_{1}f_{2}})=N(r,f_{1})+N(r,f_{2})-N(r,\frac{1}{f_{1}})-N(r,\frac{1}{f_{2}}),$$
where $0<r<\infty$.
\end{lemma}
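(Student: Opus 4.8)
The plan is to recognize the claimed identity as the statement that the \emph{net counting function}
\[
\widetilde N(r,f):=N(r,f)-N(r,\tfrac{1}{f})
\]
is additive with respect to multiplication, i.e.\ $\widetilde N(r,f_{1}f_{2})=\widetilde N(r,f_{1})+\widetilde N(r,f_{2})$. Since the passage from the unintegrated counting function $n(t,\cdot)$ to $N(r,\cdot)$ is a linear operation, it suffices to establish the corresponding additivity at the level of $n$, namely $\widetilde n(t,f_{1}f_{2})=\widetilde n(t,f_{1})+\widetilde n(t,f_{2})$ where $\widetilde n(t,f):=n(t,f)-n(t,\tfrac{1}{f})$; and this in turn reduces to a purely local computation at each point $z_{0}$ with $|z_{0}|\le t$.

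For the local step I would fix $z_{0}$ and, for a meromorphic function $g$, let $\nu_{z_{0}}(g)$ denote its order at $z_{0}$, counted positively for a zero, negatively for a pole, and as $0$ otherwise. The contribution of $z_{0}$ to $n(t,g)$ is the pole order $\max(0,-\nu_{z_{0}}(g))$, and its contribution to $n(t,\tfrac{1}{g})$ is the zero order $\max(0,\nu_{z_{0}}(g))$; hence its contribution to $\widetilde n(t,g)$ is $\max(0,-\nu)-\max(0,\nu)=-\nu_{z_{0}}(g)$, an identity valid uniformly in all three cases (zero, pole, regular nonzero). Summing over the finitely many $z_{0}$ with $|z_{0}|\le t$ gives $\widetilde n(t,g)=-\sum_{|z_{0}|\le t}\nu_{z_{0}}(g)$. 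The fundamental fact $\nu_{z_{0}}(f_{1}f_{2})=\nu_{z_{0}}(f_{1})+\nu_{z_{0}}(f_{2})$, coming from the local factorization of a meromorphic function as $(z-z_{0})^{\nu}$ times a nonvanishing holomorphic unit, then yields the additivity $\widetilde n(t,f_{1}f_{2})=\widetilde n(t,f_{1})+\widetilde n(t,f_{2})$ for every $t\ge 0$.

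Finally I would integrate back. Using the standard representation
\[
N(r,g)=\int_{0}^{r}\frac{n(t,g)-n(0,g)}{t}\,dt+n(0,g)\log r
\]
applied to each of the four functions $f_{1}f_{2},\,f_{1},\,f_{2}$ and their reciprocals, the additivity of $\widetilde n$ (which holds for every $t$, including $t=0$) passes through the linear integral operator to give $\widetilde N(r,f_{1}f_{2})=\widetilde N(r,f_{1})+\widetilde N(r,f_{2})$, which is precisely the asserted identity. The only point demanding care, and the one this net-order formulation is designed to absorb automatically, is a point $z_{0}$ at which $f_{1}$ has a zero while $f_{2}$ has a pole (or vice versa): there the product may be regular, a zero, or a pole according to the relative orders, so the zeros and poles of $f_{1}f_{2}$ cannot be tracked separately from those of the factors. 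Carrying the single signed quantity $\nu_{z_{0}}$ rather than treating $N(r,\cdot)$ and $N(r,\tfrac{1}{\cdot})$ in isolation bypasses this cancellation case analysis entirely, so I expect no substantive obstacle beyond this bookkeeping.
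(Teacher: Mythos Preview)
Your argument is correct and is exactly the standard proof: the additivity of the order $\nu_{z_{0}}$ under multiplication gives $\widetilde n(t,f_{1}f_{2})=\widetilde n(t,f_{1})+\widetilde n(t,f_{2})$ pointwise, and linearity of the integration passing from $n$ to $N$ finishes it. The paper does not supply its own proof of this lemma; it is quoted from the standard references (Hayman, Yang--Yi, Yang), where the argument is precisely the one you give.
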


\begin{lemma}\label{23l}\cite{h3}  Let $f$ be a non-constant meromorphic function of $\rho_{2}(f)<1$,  and let $c$ be a non-zero complex number. Then
$$T(r,f(z))=T(r,f(z+c))+S(r,f),$$
for all r outside of a possible exceptional set E with finite logarithmic measure.
\end{lemma}

\begin{lemma}\label{24l} Let $f$ be a transcendental meromorphic function of $\rho_{2}(f)<1$ such that $\overline{N}(r,f)=S(r,f)$,  let $c$ be a nonzero constant, $k$ be a positive integer, and let $a(z)$ be a small function of $f(z+c)$ and $f^{(k)}(z)$. If $f(z+c)$ and $f^{(k)}(z)$ share $a(z),\infty$ CM, and $N(r,\frac{1}{f^{(k)}(z+c)-a^{(k)}(z)})=S(r,f)$, then $T(r,e^{p})=S(r,f)$, where $p$ is an entire function of order less than $1$.
\end{lemma}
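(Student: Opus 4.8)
The plan is to convert the sharing hypotheses into an exponential representation and then to force that exponential to be small. First I would record the pole count: since $\overline{N}(r,f)=S(r,f)$, derivatives create no new poles and Lemma \ref{23l} preserves the shift, so $\overline{N}(r,f(z+c))=\overline{N}(r,f^{(k)})=S(r,f)$. Because $f(z+c)$ and $f^{(k)}(z)$ share $a(z)$ and $\infty$ CM, the quotient $\frac{f(z+c)-a}{f^{(k)}(z)-a}$ has (up to the negligible poles of $a$) neither zeros nor poles, whence
$$\frac{f(z+c)-a(z)}{f^{(k)}(z)-a(z)}=e^{p(z)}$$
for an entire function $p$. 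Writing $H=f(z+c)-a$ and $u=f^{(k)}(z)-a$, we then have $H=ue^{p}$, and note $H^{(k)}=f^{(k)}(z+c)-a^{(k)}(z)$.

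Next I would control $p$. From $p'=\frac{H'}{H}-\frac{u'}{u}$, the lemma on the logarithmic derivative together with the growth comparison in Lemma \ref{23l} (which makes $S(r,H)=S(r,u)=S(r,f)$) yields $m(r,p')=S(r,f)$; since $p'$ is entire, $T(r,p')=S(r,f)$. Under \eqref{21l}'s growth regime (1.1) this forces $\rho(p')=0$, hence $\rho(p)=0<1$, which already gives the stated order bound, and it also shows every $p^{(j)}$ is a small function of $f$.

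The third step rewrites the remaining hypothesis. Differentiating $H=ue^{p}$ exactly $k$ times and using $(e^{p})^{(j)}=e^{p}R_{j}(p',\dots,p^{(j)})$ with $T(r,R_{j})=S(r,f)$, I obtain
$$H^{(k)}=e^{p}L,\qquad L=u^{(k)}+c_{k-1}u^{(k-1)}+\cdots+c_{0}u,$$
where the $c_{j}$ are differential polynomials in the $p^{(i)}$, so $T(r,c_{j})=S(r,f)$. Since $e^{p}$ has no zeros, the hypothesis $N\!\bigl(r,\tfrac{1}{f^{(k)}(z+c)-a^{(k)}}\bigr)=S(r,f)$ becomes $N(r,\tfrac1L)=S(r,f)$. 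As the $a$-points of $H$ and $u$ coincide with equal multiplicities, the identity $HL=uH^{(k)}$ shows that the shared $a$-points of multiplicity $>k$ lie among the zeros of $L$; hence $N(r,\tfrac1L)=S(r,f)$ forces almost all shared $a$-points to have multiplicity at most $k$.

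Finally, to bound $e^{p}$ itself I would apply the Second Main Theorem to $e^{p}$, which omits $0$ and $\infty$, giving $T(r,e^{p})\le\overline{N}\!\bigl(r,\tfrac1{e^{p}-1}\bigr)+S(r,e^{p})$; since $T(r,e^{p})=O(T(r,f))$ one has $S(r,e^{p})=S(r,f)$. As $e^{p}-1=(H-u)/u$, its zeros are the coincidence points $f(z+c)=f^{(k)}(z)$, so the whole matter reduces to proving $\overline{N}\!\bigl(r,\tfrac1{f(z+c)-f^{(k)}(z)}\bigr)=S(r,f)$. This coincidence estimate is the main obstacle: I expect it to require feeding $N(r,\tfrac1L)=S(r,f)$ and the CM sharing of the $a$-points back into the relation $H^{(k)}=e^{p}L$ (equivalently $H^{(k)}-L=L(e^{p}-1)$), together with Lemma \ref{22l} and the difference logarithmic-derivative estimate of Lemma \ref{21l} applied to $f^{(k)}(z+c)$, in order to show that the coincidence set cannot be large without contradicting (1.1). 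Once $\overline{N}(r,\tfrac1{e^{p}-1})=S(r,f)$ is established, the Second Main Theorem delivers $T(r,e^{p})=S(r,f)$, and the order statement from the second step completes the proof.
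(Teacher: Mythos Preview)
Your setup through the differentiation step is fine and essentially matches the paper: write $f_c-a=e^{p}(f^{(k)}-a)$, differentiate $k$ times, and note that $H^{(k)}=f^{(k)}_c-a^{(k)}$ has few zeros by hypothesis. The gap is in the last step. You try to finish via the Second Main Theorem with the \emph{constant} target $1$, which forces you to bound $\overline{N}(r,1/(e^{p}-1))$, i.e.\ the coincidence set $\{f(z+c)=f^{(k)}(z)\}$. Nothing in the hypotheses controls that set, and your suggested route through $H^{(k)}-L=L(e^{p}-1)$ does not help: the zeros of $e^{p}-1$ would have to be zeros of $H^{(k)}-L$, but $H^{(k)}-L$ is a genuine transcendental combination of $f^{(k)}_c$ and derivatives of $f^{(k)}$, with no reason to have $S(r,f)$ many zeros. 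This is a real obstruction, not a technicality.

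The paper avoids the value $1$ altogether. Writing $g=f^{(k)}_c-a^{(k)}$ (your $H^{(k)}$) and $g_{-c}=f^{(k)}-a^{(k)}_{-c}$, the differentiated relation reads $g=\sum_j\binom{k}{j}(e^{p})^{(j)}g_{-c}^{(k-j)}+B^{(k)}$ with $B=e^{p}(a^{(k)}_{-c}-a)$. Dividing by $g$ produces $1-\dfrac{B^{(k)}}{g}=De^{p}$ with
\[
D=\sum_j\binom{k}{j}\frac{(e^{p})^{(j)}}{e^{p}}\cdot\frac{g_{-c}^{(k-j)}}{g},
\]
and the hypothesis $N(r,1/g)=S(r,f)$ (together with Lemma~\ref{21l} for the ratios $g_{-c}^{(i)}/g$) gives $T(r,D)=S(r,f)$. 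Now the equation itself forces $g(e^{-p}-D)=\beta$ with $\beta$ small, so $N\bigl(r,1/(e^{-p}-D)\bigr)=S(r,f)$; applying the Second Main Theorem to $e^{-p}$ with the \emph{small-function} target $D$ (not $1$) then yields $T(r,e^{p})=S(r,f)$ unless $D\equiv0$. In the degenerate case $D\equiv0$ one integrates $g=B^{(k)}$ to get $f_c=e^{p}(a^{(k)}_{-c}-a)+P+a$, and a shift comparison $T(r,e^{p+p_{-c}})=2T(r,e^{p})+S(r,f)$ collapses $T(r,e^{p})$ to $S(r,f)$. The moral: divide by the quantity with few zeros to manufacture a small target $D$, rather than aiming at the uncontrolled target $1$.
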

\begin{proof}
 Since $f$ is a transcendental meromorphic function of $\rho_{2}(f)<1$, $\overline{N}(r,f)=S(r,f)$, and $f_{c}$ and $f^{(k)}$ share $a$ and $\infty$ CM, then there is an entire function $p$ of order less than $1$ such that
\begin{align}
f_{c}-a=e^{p}(f^{(k)}-a^{(k)}_{-c})+e^{p}(a^{(k)}_{-c}-a).
\end{align}
Suppose on the contrary that $T(r,e^{p})\neq S(r,f)$.\\
Set $g=f^{(k)}_{c}-a^{(k)}$. Differentiating (2.1) $k$ times we have
\begin{align}
g=(e^{p})^{(k)}g_{-c}+k(e^{p})^{(k-1)}g_{-c}'+\cdots+k(e^{p})'g_{-c}^{(k-1)}+e^{p}g_{-c}^{(k)}+B^{(k)},
\end{align}
where $B=e^{p}(a^{(k)}_{-c}-a)$.\\
It is easy to see that $g\not\equiv0$. Then we rewrite (2.2) as
\begin{eqnarray}
1-\frac{B^{(k)}}{g}=De^{p},
\end{eqnarray}
where
\begin{align}
D&=e^{-p}[(e^{p})^{(k)}\frac{g_{-c}}{g}+k(e^{p})^{(k-1)}\frac{g_{-c}'}{g}+\cdots\notag\\
&+k(e^{p})'\frac{g_{-c}^{(k-1)}}{g}+(e^{p})\frac{g_{-c}^{(k)}}{g}].
\end{align}
Since  $f$ is a transcendental meromorphic function with $\rho_{2}(f)<1$ and $f^{(k)}$ and $f_{c}$ share  $ \infty$ CM, we can see from $\overline{N}(r,f)=S(r,f)$, Lemma 2.1 and Lemma 2.3 that
\begin{eqnarray*}
\begin{aligned}
(1+o(1))N(r,f)+S(r,f)=N(r,f_{c})=N(r,f^{(k)}),
\end{aligned}
\end{eqnarray*}
and on the other hand
\begin{eqnarray*}
\begin{aligned}
k\overline{N}(r,f_{c})+N(r,f_{c})=N(r,f^{(k)}_{c}),  \overline{N}(r,f_{c})=\overline{N}(r,f^{(k)})=\overline{N}(r,f),
\end{aligned}
\end{eqnarray*}
which follows from above equalities that $N(r,f^{(k)})=N(r,f^{(k)}_{c})+S(r,f)$, and thus we can know that $g$ and $g_{-c}$ share $\infty$ CM almost.  It is easy to see from the assumption $f_{c}$ and $f^{(k)}$ share $\infty$ CM that there exists no simple pole point of $f_{c}$.  Now we  estimate $N(r,\frac{g_{-c}^{(i)}}{g})$. Let $z_{0}$ be a pole of $f$ with multiplicity  $n$, than $z_{0}$ is a pole of $g$ with multiplicity  $n+2k$, and also $z_{0}$ is a pole of $g_{-c}^{(i)}$ with multiplicity  $n+k+i$. Then we can see that $z_{0}$ is a zero point of $\frac{g_{-c}^{(i)}}{g}$ with $k-i$.  Let $z_{1}$ be a pole of $f_{c}$ with multiplicity  $m$, then $z_{1}$ is a pole of $g$ with multiplicity  $m+k$, and also $z_{1}$ is a pole of $g_{-c}^{(i)}$ with multiplicity  $m+i$. Then we can see that $z_{1}$ is a zero point of $\frac{g_{-c}^{(i)}}{g}$ with $k-i$. Note that $N(r,\frac{1}{f^{(k)}_{c}-a^{(k)}})=N(r,\frac{1}{g})=S(r,f)$, then $N(r,\frac{g_{-c}^{(i)}}{g})=S(r,f)$, and hence
\begin{align}
T(r,D)&\leq\sum_{i=0}^{k}(T(r,\frac{(e^{p})^{(i)}}{e^{p}})+T(r,\frac{C_{k}^{i}g_{-c}^{(k-i)}}{g}))+S(r,f)\notag\\
&\leq\sum_{i=0}^{k}(S(r,e^{p})+m(r,\frac{g_{-c}^{(i)}}{g_{-c}})+N(r,\frac{g_{-c}^{(i)}}{g}))+S(r,f)\notag\\
&=S(r,e^{p})+S(r,f),
\end{align}
where $C_{k}^{i}$ is a combinatorial number. By (2.1) and Lemma 2.1,  we get
\begin{align}
T(r,e^{p})&\leq T(r,f_{c})+T(r,f^{(k)})+S(r,f)\notag\\
&\leq 2T(r,f)+S(r,f).
\end{align}
Then it follows from (2.5) that $T(r,D)=S(r,f)$. Next we discuss  two cases.

{\bf Case1.} \quad $e^{-p}-D\not\equiv0$. Rewrite (2.3) as
\begin{align}
ge^{p}(e^{-p}-D)=B^{(k)}.
\end{align}
We claim that $D\equiv0$. Otherwise, using the Lemma 2.8 to $e^{-p}$, we get
\begin{eqnarray*}
\begin{aligned}
&m(r,\frac{1}{e^{-p}-D})+N(r,\frac{1}{e^{-p}-D})=T(r,e^{-p})\notag\\
&\leq \overline{N}(r,e^{-p})+\overline{N}(r,\frac{1}{e^{-p}})+\overline{N}(r,\frac{1}{e^{-p}-D})\notag\\
&+S(r,e^{p})=\overline{N}(r,\frac{1}{e^{-p}-D})+S(r,f)\notag\\
&\leq T(r,e^{-p})+S(r,f),
\end{aligned}
\end{eqnarray*}
that is to say
\begin{eqnarray*}
\begin{aligned}
T(r,e^{p})=T(r,e^{-p})+O(1)=\overline{N}(r,\frac{1}{e^{-p}-D})+S(r,f)
\end{aligned}
\end{eqnarray*}
and
\begin{eqnarray*}
\begin{aligned}
N(r,\frac{1}{e^{-p}-D})=N_{1}(r,\frac{1}{e^{-p}-D})+S(r,f).
\end{aligned}
\end{eqnarray*}
It follows form above two equalities that
\begin{eqnarray*}
\begin{aligned}
T(r,e^{p})=N_{1}(r,\frac{1}{e^{-p}-D})+S(r,f).
\end{aligned}
\end{eqnarray*}
Because the numbers of zeros and poles of $B^{(k)}$ are $S(r,f)$, we can see from (2.7) and $\overline{N}(r,f)=S(r,f)$ that the multiplicities of poles of $g$  are almost $1$. And then
\begin{eqnarray*}
\begin{aligned}
&N(r,f)+k\overline{N}(r,f)=N(r,g)+S(r,f)=N(r,\frac{1}{e^{-p}-D})+S(r,f)\notag\\
&=N_{1}(r,f)+S(r,f)\leq \overline{N}(r,f)+S(r,f)=S(r,f).
\end{aligned}
\end{eqnarray*}
it follows from above that $\overline{N}(r,\frac{1}{e^{-p}-D})=S(r,f)$. Then by Lemma 2.8 in the following we can obtain
\begin{align}
T(r,e^{p})&=T(r,e^{-p})+O(1)\notag\\
&\leq \overline{N}(r,e^{-p})+\overline{N}(r,\frac{1}{e^{-p}})+\overline{N}(r,\frac{1}{e^{-p}-D})\notag\\
&+S(r,e^{p})=S(r,f),
\end{align}
which contradicts with present assumption. Thus $D\equiv0$. Then by (2.7) we get
\begin{align}
g=B^{(k)}.
\end{align}
Integrating (2.9), we get
\begin{align}
f_{c}=e^{p}(a^{(k)}_{-c}-a)+P+a,
\end{align}
where $P$ is a polynomial of degree at most $k-1$.  (2.10) implies
\begin{align}
T(r,f_{c})=T(r,e^{p})+S(r,f).
\end{align}
Substituting (2.9) and (2.10) into (2.1) we can obtain
\begin{align}
e^{p}(a^{(k)}_{-c}-a)+P=e^{p+p_{-c}}L_{-c},
\end{align}
where $L_{-c}$ is the differential polynomial in $$ p'_{-c},\ldots,p^{(k)}_{-c}, a_{-2c}-a_{-c},(a_{-2c}-a_{-c})',\ldots,(a_{-2c}-a_{-c})^{(k)},$$
and it is a small function of $f(z+c)$. On the one hand
\begin{align}
2T(r,e^{p})&=T(r,e^{2p})= m(r,e^{2p})\notag\\
&\leq m(r,e^{p+p_{-c}})+m(r,\frac{e^{p}}{e^{p_{-c}}})\notag\\
&\leq T(r,e^{p+p_{-c}})+S(r,f).
\end{align}
On the other hand, we can prove similarly that
\begin{align}
T(r,e^{p+p_{-c}})\leq 2T(r,e^{p})+S(r,f).
\end{align}
So
\begin{align}
T(r,e^{p+p_{-c}})= 2T(r,e^{p})+S(r,f).
\end{align}
By (2.11), (2.12) and (2.15) we can get $T(r,e^{p})=2T(r,e^{p})+S(r,f)$, which is $T(r,e^{p})=S(r,f)$, a contradiction.

{\bf Case2.} \quad $e^{-p}-D\equiv0$. Immediately, we get $T(r,e^{p})=S(r,f)$, but it's impossible.

Of above discussions, we conclude that $T(r,e^{p})=S(r,f)$.
\end{proof}
\begin{lemma}\label{25l} Let $f$ be a transcendental meromorphic function of $\rho_{2}(f)<1$ such that $\overline{N}(r,f)=S(r,f)$, let $k$ be a positive integer and $c\neq0$  a complex value, and let $a\not\equiv\infty$ and $b\not\equiv\infty$ be two distinct small functions of $f$. Suppose
\[L(f_{c})=\left|\begin{array}{rrrr}f_{c}-a& &a-b \\
f'_{c}-a'& &a'-b'\end{array}\right|\]
and
\[L(f^{(k)})=\left|\begin{array}{rrrr}f^{(k)}-a& &a-b \\
f^{(k+1)}-a'& &a'-b'\end{array}\right|,\]
and $f_{c}$ and $f^{(k)}$ share $a,\infty$ CM, and share $b$ IM,  then $L(f_{c})\not\equiv0$ and $L(f^{(k)})\not\equiv0$.
\end{lemma}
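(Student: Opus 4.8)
The plan is to recognize that both $L(f_c)$ and $L(f^{(k)})$ are Wronskians. Indeed the bottom row of each $2\times2$ array is the termwise derivative of the top row, since $(f_c-a)'=f_c'-a'$, $(f^{(k)}-a)'=f^{(k+1)}-a'$ and $(a-b)'=a'-b'$; thus $L(f_c)=W(f_c-a,\,a-b)$ and $L(f^{(k)})=W(f^{(k)}-a,\,a-b)$. The elementary fact I would use is that for meromorphic $\phi,\psi$ with $\psi\not\equiv0$, the vanishing $W(\phi,\psi)=\phi\psi'-\phi'\psi\equiv0$ forces $\phi\equiv C\psi$ for a constant $C\in\mathbb{C}$, because $(\phi/\psi)'=-W(\phi,\psi)/\psi^{2}\equiv0$ off the zeros of $\psi$ and $\mathbb{C}$ is connected. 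Since $a-b\not\equiv0$ (the small functions are distinct), $L(f_c)\equiv0$ would give $f_c-a=C(a-b)$, i.e. $f_c=(1+C)a-Cb$, and $L(f^{(k)})\equiv0$ would give $f^{(k)}-a=C(a-b)$, i.e. $f^{(k)}=(1+C)a-Cb$; in either case the relevant function lies in $S(f)$.

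Disposing of $L(f_c)$ is then immediate: if $f_c\in S(f)$ then $T(r,f_c)=S(r,f)$, whereas Lemma 2.3 gives $T(r,f_c)=T(r,f)+S(r,f)$, forcing $T(r,f)=S(r,f)$ and contradicting the transcendence of $f$. Hence $L(f_c)\not\equiv0$.

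For $L(f^{(k)})$ I would transfer the scarcity of $a$- and $b$-points from $f^{(k)}$ to $f_c$ via the sharing hypotheses. Assume first $C\neq0,-1$. From $f^{(k)}-a=C(a-b)$ and $f^{(k)}-b=(1+C)(a-b)$ and the smallness of $a,b$ we get $N(r,\tfrac{1}{f^{(k)}-a})=S(r,f)$ and $\overline N(r,\tfrac{1}{f^{(k)}-b})=S(r,f)$. Because $f^{(k)}$ and $f_c$ share $a$ CM and $b$ IM, this yields $\overline N(r,\tfrac{1}{f_c-a})=S(r,f)$ and $\overline N(r,\tfrac{1}{f_c-b})=S(r,f)$; together with $\overline N(r,f_c)=\overline N(r,f)=S(r,f)$, the second main theorem for $f_c$ with the three distinct small targets $a,b,\infty$ gives $T(r,f_c)\le \overline N(r,f_c)+\overline N(r,\tfrac{1}{f_c-a})+\overline N(r,\tfrac{1}{f_c-b})+S(r,f_c)=S(r,f)$, again contradicting Lemma 2.3. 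Thus $L(f^{(k)})\not\equiv0$ in this range of $C$.

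The main obstacle is the degenerate range $C\in\{0,-1\}$, where $f^{(k)}\equiv a$ or $f^{(k)}\equiv b$; then $f^{(k)}$ is literally a small function and the $a$- or $b$-point counting above collapses. Here I would argue directly that a transcendental $f$ with $\overline N(r,f)=S(r,f)$ cannot have $f^{(k)}\in S(f)$. Smallness of $f^{(k)}$ gives $N(r,f^{(k)})=S(r,f)$, hence $N(r,f)=S(r,f)$ (a pole of $f$ of order $m$ produces a pole of $f^{(k)}$ of order $m+k$); the logarithmic derivative estimate $m(r,f^{(k)}/f)=S(r,f)$ then forces $m(r,\tfrac1f)=S(r,f)$ and so $N(r,\tfrac1f)=T(r,f)+S(r,f)$. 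Comparing the zeros of $f$ with the small function $f^{(k)}$ — every zero of $f$ of order $\ge k+1$ is a zero of $f^{(k)}$ and hence contributes only $S(r,f)$, while the low–order zeros are controlled through $\overline N$ — or, equivalently, integrating $f^{(k)}$ $k$ times and invoking the growth/Cauchy estimate permitted by (1.1), should yield $T(r,f)=S(r,f)$, the required contradiction. I expect this last step, ruling out $f^{(k)}$ being a small function, to be the technically delicate point, since it is precisely where the hypotheses $\overline N(r,f)=S(r,f)$ and the regularity condition (1.1) must be used in an essential way, whereas the Wronskian dependence and the second main theorem are routine.
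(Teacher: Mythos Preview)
Your approach is essentially the paper's, and the part you treat as the ``main obstacle'' is an artifact of an unnecessary case split. The paper does not distinguish values of $C$ at all: it first establishes, \emph{before} assuming $L(f^{(k)})\equiv0$, the inequality
\[
T(r,f_c)\le \overline N\!\Big(r,\frac{1}{f_c-a}\Big)+\overline N\!\Big(r,\frac{1}{f_c-b}\Big)+\overline N(r,f_c)+S(r,f)
=\overline N\!\Big(r,\frac{1}{f^{(k)}-a}\Big)+\overline N\!\Big(r,\frac{1}{f^{(k)}-b}\Big)+S(r,f)\le 2T(r,f^{(k)})+S(r,f),
\]
using the sharing and Lemma~2.8 exactly as you do in your ``$C\neq0,-1$'' case. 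Then, once $L(f^{(k)})\equiv0$ forces $f^{(k)}=a+C(a-b)$ for \emph{some} constant $C$, one has $T(r,f^{(k)})=S(r,f)$ regardless of $C$, so the displayed inequality gives $T(r,f_c)=S(r,f)$ and Lemma~2.3 finishes. No separate treatment of $C\in\{0,-1\}$ is needed.

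Your detour---trying to prove directly that a transcendental $f$ with $\overline N(r,f)=S(r,f)$ cannot have $f^{(k)}\in S(f)$---is therefore unnecessary, and since you leave it incomplete (``should yield\ldots I expect this last step\ldots to be the technically delicate point'') it is a genuine gap in the proposal as written. Moreover, the degenerate cases $f^{(k)}\equiv a$ or $f^{(k)}\equiv b$ are excluded by the sharing hypotheses themselves under the standard convention (sharing a value $a$ presupposes neither function is identically $a$); alternatively, if one allows them, sharing forces $f_c\equiv a$ or $f_c\equiv b$ immediately. Either way the contradiction is trivial, not delicate. Drop the case split and your argument becomes the paper's.
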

\begin{proof}
Suppose that $L(f_{c})\equiv0$, then we can get $\frac{f'_{c}-a'}{f_{c}-a}\equiv\frac{a'-b'}{a-b}$. Integrating both side of above we can obtain $f_{c}-a=C_{1}(a-b)$, where $C_{1}$ is a nonzero constant. So by Lemma 2.3, we have $T(r,f)=T(r,f_{c})+S(r,f)=T(r,C(a-b)+a)=S(r,f)$, a contradiction. Hence $L(f_{c})\not\equiv0$.

Since $f^{(k)}$ and $f_{c}$ share $a$ CM and $b$ IM, and $f$ is a transcendental meromorphic function of $\rho_{2}(f)<1$ such that $\overline{N}(r,f)=S(r,f)$, then by the  Lemma 2.8, we get
\begin{align}
T(r,f_{c})&\leq \overline{N}(r,\frac{1}{f_{c}-a})+\overline {N}(r,\frac{1}{f_{c}-b})+\overline{N}(r,f_{c})+S(r,f)\notag\\
&= \overline {N}(r,\frac{1}{f^{(k)}-a})+\overline {N}(r,\frac{1}{f^{(k)}-b})+S(r,f)\notag\\
&\leq 2T(r,f^{(k)})+S(r,f).
\end{align}
Hence $a$ and $b$ are small functions of $f^{(k)}$. If $L(f^{(k)})\equiv0$, then we can get $f^{(k)}-a=C_{2}(a-b)$, where $C_{2}$ is a nonzero constant. And we get $T(r,f^{(k)})=S(r,f^{(k)})$. Combing (2.16) we obtain $T(r,f)=T(r,f_{c})+S(r,f)=T(r,C(a-b)+a)=S(r,f)$, a contradiction.
\end{proof}

\begin{lemma}\label{26l}  Let $f$ be a transcendental meromorphic function, let $k_{j}(j=1,2,\ldots,q)$ be  distinct constants, and let $a\not\equiv\infty$ and $b\not\equiv\infty$ be two distinct small functions of $f$ . Again let $d_{j}=a-k_{j}(a-b)$ $(j=1,2,\ldots,q)$. Then
$$m(r,\frac{L(f_{c})}{f_{c}-a})=S(r,f), \quad m(r,\frac{L(f_{c})}{f_{c}-d_{j}})=S(r,f).$$
for $1\leq i\leq q$ and
$$m(r,\frac{L(f_{c})f_{c}}{(f_{c}-d_{1})(f_{c}-d_{2})\cdots(f_{c}-d_{m})})=S(r,f),$$
where $L(f_{c})$ is defined as in Lemma 2.5, and $2\leq m\leq q$.
\end{lemma}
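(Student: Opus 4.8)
The plan is to exploit that $L(f_c)$ is a Wronskian-type determinant $W(f_c-a,\,a-b)$ whose value is invariant under adding a multiple of the second column to the first. First I would note that $d_j-a=-k_j(a-b)$, so that $f_c-d_j=(f_c-a)+k_j(a-b)$ and $f'_c-d'_j=(f'_c-a')+k_j(a'-b')$; hence by linearity of the determinant in its first column,
$$L(f_c)=\begin{vmatrix} f_c-a & a-b \\ f'_c-a' & a'-b'\end{vmatrix}=\begin{vmatrix} f_c-d_j & a-b \\ f'_c-d'_j & a'-b'\end{vmatrix}$$
for every $j$, the extra contribution $k_j(a-b)$ being proportional to the second column and so producing a determinant with two equal columns, which vanishes. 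Taking $k_j=0$ (that is $d_j=a$) recovers the original $L(f_c)$, so the case of $f_c-a$ is just the special case $d_j=a$ below.

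Second, dividing this identity by $f_c-d_j$ I would obtain
$$\frac{L(f_c)}{f_c-d_j}=(a'-b')-(a-b)\frac{(f_c-d_j)'}{f_c-d_j}.$$
By the first fundamental theorem the proximity function is then bounded by $m(r,a'-b')+m(r,a-b)+m\bigl(r,(f_c-d_j)'/(f_c-d_j)\bigr)+O(1)$. Since $a,b,d_j$ are small functions of $f$ and, by Lemma 2.3, $T(r,f_c-d_j)=T(r,f)+S(r,f)$, the first two terms are $S(r,f)$ and the logarithmic derivative lemma bounds the third by $S(r,f)$. This settles both $m(r,L(f_c)/(f_c-a))=S(r,f)$ and $m(r,L(f_c)/(f_c-d_j))=S(r,f)$.

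For the last estimate I would decompose the rational expression $f_c/\prod_{i=1}^m(f_c-d_i)$ into partial fractions in the variable $f_c$. As $m\ge 2$, the numerator has lower degree than the denominator, giving
$$\frac{f_c}{\prod_{i=1}^m(f_c-d_i)}=\sum_{j=1}^m\frac{A_j}{f_c-d_j},\qquad A_j=\frac{d_j}{\prod_{i\neq j}(d_j-d_i)}.$$
Using $d_j-d_i=(k_i-k_j)(a-b)$ together with $a\not\equiv b$, each $A_j$ is a ratio of small functions, so $T(r,A_j)=S(r,f)$. Multiplying through by $L(f_c)$ and invoking the two preceding estimates term by term yields
$$m\Bigl(r,\frac{L(f_c)f_c}{\prod_{i=1}^m(f_c-d_i)}\Bigr)\le\sum_{j=1}^m\Bigl(m(r,A_j)+m\bigl(r,\tfrac{L(f_c)}{f_c-d_j}\bigr)\Bigr)+O(1)=S(r,f).$$

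The only genuinely delicate point is the bookkeeping ensuring the coefficients $A_j$ are small functions; this is exactly where the distinctness of the $k_j$ and the hypothesis $a\not\equiv b$ are used decisively, since they keep the denominators $d_j-d_i$ from vanishing identically. Everything else reduces to the logarithmic derivative lemma combined with the shift-invariance of the characteristic supplied by Lemma 2.3.
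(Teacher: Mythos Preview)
Your proof is correct and follows essentially the same route as the paper: reduce $L(f_c)/(f_c-d_j)$ to a small function plus $(a-b)$ times a logarithmic derivative, then handle the product via the identical partial-fraction decomposition with coefficients $d_j/\prod_{i\neq j}(d_j-d_i)$. Your Wronskian column-operation observation is a clean way to make the $d_j$ case explicit (the paper only writes out the case $d_j=a$ and leaves the rest implicit), but the underlying mechanism is the same.
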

\begin{proof}
Obviously, we have
$$m(r,\frac{L(f_{c})}{f_{c}-a})\leq m(r,\frac{(a'-b')(f_{c}-a)}{f_{c}-a})+m(r,\frac{(a-b)(f'_{c}-a')}{f_{c}-a})=S(r,f),$$
and
$$\frac{L(f_{c})f_{c}}{(f_{c}-d_{1})(f_{c}-d_{2})\cdots(f_{c}-d_{q})}=\sum_{i=1}^{q}\frac{C_{i}L(f_{c})}{f_{c}-d_{i}},$$
where $C_{i}=\frac{d_{j}}{\prod\limits_{j\neq i}(d_{i}-d_{j})}$ are small functions of $f$. By Lemma 2.1 and above, we have
\begin{align}
&m(r,\frac{L(f_{c})f_{c}}{(f_{c}-d_{1})(f_{c}-d_{2})\cdots(f_{c}-d_{q})})=m(r,\sum_{i=1}^{q}\frac{C_{i}L(f_{c})}{f_{c}-d_{i}})\notag\\
&\leq\sum_{i=1}^{q}m(r,\frac{L(f_{c})}{f_{c}-d_{i}})+S(r,f)=S(r,f).
\end{align}
\end{proof}
\begin{lemma}\label{27l} Let $f$ and $g$ be are two non-constant meromorphic functions such that $\overline{N}(r,f)=S(r,f)$, and let $a\not\equiv\infty$ and $b\not\equiv\infty$ be two distinct small functions of  $f$ and $g$. If
$$H=\frac{L(f)}{(f-a)(f-b)}-\frac{L(g)}{(g-a)(g-b)}\equiv0,$$
where
$$L(f)=(a'-b')(f-a)-(a-b)(f'-a')$$
and
$$L(g)=(a'-b')(g-a)-(a-b)(g'-a').$$
And  if $f$ and $g$ share $a,\infty$ CM, and share $b$ IM, then either $2T(r,f)=\overline{N}(r,\frac{1}{f-a})+\overline{N}(r,\frac{1}{f-b})+S(r,f)$, or $f=g$.
\end{lemma}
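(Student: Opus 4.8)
The plan is to integrate the hypothesis $H\equiv 0$ into a Möbius relation between $f$ and $g$, and then to extract a small exceptional value that forces the second main theorem to become an equality. The starting observation is that $L(f)/\big((f-a)(f-b)\big)$ is, up to sign, a logarithmic derivative; indeed a direct expansion gives
\[
\frac{L(f)}{(f-a)(f-b)}=-\left(\frac{f'-a'}{f-a}-\frac{f'-b'}{f-b}\right)=-\left(\log\frac{f-a}{f-b}\right)',
\]
and the same for $g$. Hence $H\equiv 0$ says exactly that $\log\frac{f-a}{f-b}$ and $\log\frac{g-a}{g-b}$ have equal derivatives, so their difference is constant and integration yields
\[
\frac{f-a}{f-b}=A\,\frac{g-a}{g-b}
\]
for some nonzero constant $A$. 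This is the key reduction, turning the claim into a case analysis on $A$.

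If $A=1$, cross-multiplying gives $(f-a)(g-b)=(f-b)(g-a)$, which simplifies to $(f-g)(a-b)=0$; since $a\not\equiv b$ this forces $f\equiv g$, one of the two desired alternatives. So from now on I would assume $A\neq 1$ and aim for the equality $2T(r,f)=\overline{N}(r,\frac{1}{f-a})+\overline{N}(r,\frac{1}{f-b})+S(r,f)$.

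When $A\neq 1$ I would solve the relation for $g$, obtaining a Möbius transform $g=\frac{(Aa-b)f+ab(1-A)}{(A-1)f+(a-Ab)}$ with small-function coefficients, and compute
\[
g-a=\frac{(a-b)(f-a)}{D},\qquad g-b=\frac{A(a-b)(f-b)}{D},\qquad D=(A-1)f+(a-Ab).
\]
These re-confirm the sharing of $a$ and $b$, and, crucially, show that the poles of $g$ occur precisely at the zeros of $D$, i.e.\ where $f$ equals the small function $c_{0}=\frac{Ab-a}{A-1}$. Since $f$ and $g$ share $\infty$ CM and $\overline{N}(r,f)=S(r,f)$, the poles of $g$ number only $S(r,f)$, whence $\overline{N}(r,\frac{1}{f-c_{0}})=S(r,f)$. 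One checks at once that $c_{0}\not\equiv a$ and $c_{0}\not\equiv b$ (either identity would force $a\equiv b$), so $a,b,c_{0},\infty$ are four distinct small targets of $f$.

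Finally I would apply the second main theorem for small functions to $f$ with these four targets,
\[
2T(r,f)\leq \overline{N}\Big(r,\frac{1}{f-a}\Big)+\overline{N}\Big(r,\frac{1}{f-b}\Big)+\overline{N}\Big(r,\frac{1}{f-c_{0}}\Big)+\overline{N}(r,f)+S(r,f),
\]
and discard the last two terms as $S(r,f)$, giving $2T(r,f)\leq \overline{N}(r,\frac{1}{f-a})+\overline{N}(r,\frac{1}{f-b})+S(r,f)$; the reverse inequality is immediate from the first main theorem, so equality holds. The step I expect to be the main obstacle is the pole bookkeeping in the $A\neq 1$ case: one must verify carefully that every pole of $g$ genuinely comes from a zero of $D$ (absorbing into $S(r,f)$ both the finitely relevant singularities of the coefficients $a,b$ and the possible cancellations where the numerator of $g$ also vanishes), so that the shared-$\infty$-CM hypothesis really transfers the smallness of $\overline{N}(r,f)$ to $\overline{N}(r,\frac{1}{f-c_{0}})$. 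A secondary technical point is that the second main theorem for moving small targets must be invoked in the form valid under the growth hypothesis (1.1).
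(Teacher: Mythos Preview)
Your proposal is correct and follows essentially the same route as the paper's proof: integrate $H\equiv 0$ to a M\"obius relation, dispose of the constant-$1$ case as $f\equiv g$, and in the remaining case locate the exceptional small target $c_{0}=\dfrac{Ab-a}{A-1}$ (the paper writes it as $\dfrac{Cb-a}{C-1}$) whose preimage under $f$ is $S(r,f)$, then invoke the second main theorem for small functions. The only cosmetic difference is that you apply Lemma~2.8 once with the four targets $a,b,c_{0},\infty$, whereas the paper applies it twice with the triples $\{a,c_{0},\infty\}$ and $\{b,c_{0},\infty\}$ and adds; the paper's variant gives the marginally stronger intermediate statements $T(r,f)=\overline{N}\bigl(r,\frac{1}{f-a}\bigr)+S(r,f)$ and $T(r,f)=\overline{N}\bigl(r,\frac{1}{f-b}\bigr)+S(r,f)$, but for the stated conclusion the two arguments are equivalent.
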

\begin{proof}
Integrating $H$ which leads to
$$\frac{g-b}{g-a}=C\frac{f-b}{f-a},$$
where $C$ is a nonzero constant.\\

If $C=1$, then $f=g$. If $C\neq1$, then from above, we have
$$\frac{a-b}{g-a}\equiv \frac{(C-1)f-Cb+a}{f-a},$$
and
$$T(r,f)=T(r,g)+S(r,f)+S(r,g).$$
It follows that $N(r,\frac{1}{f-\frac{Cb-a}{C-1}})=N(r,\frac{1}{a-b})=S(r,f)$. Then by Lemma 2.8 in the following,
\begin{eqnarray*}
\begin{aligned}
T(r,f)&\leq \overline{N}(r,f)+\overline{N}(r,\frac{1}{f-a})+\overline{N}(r,\frac{1}{f-\frac{Cb-a}{C-1}})+S(r,f)\\
&\leq \overline{N}(r,\frac{1}{f-a})+S(r,f)\leq T(r,f)+S(r,f),
\end{aligned}
\end{eqnarray*}
and
\begin{eqnarray*}
\begin{aligned}
T(r,f)&\leq \overline{N}(r,f)+\overline{N}(r,\frac{1}{f-b})+\overline{N}(r,\frac{1}{f-\frac{Cb-a}{C-1}})+S(r,f)\\
&\leq \overline{N}(r,\frac{1}{f-b})+S(r,f)\leq T(r,f)+S(r,f),
\end{aligned}
\end{eqnarray*}
that is $T(r,f)=\overline{N}(r,\frac{1}{f-a})+S(r,f)$ and $T(r,f)=\overline{N}(r,\frac{1}{f-b})+S(r,f)$, and hence $2T(r,f)=\overline{N}(r,\frac{1}{f-a})+\overline{N}(r,\frac{1}{f-b})+S(r,f)$.
\end{proof}

\begin{lemma}\label{28l}\cite{y3} Let $f(z)$  be a non-constant meromorphic function, and let $a_{j}\in \hat{S}(f)$ be $q$ distinct small functions for all $j=1,2,\ldots,q$. Then
$$(q-2-\epsilon)T(r,f)\leq \sum_{j=1}^{q}\overline{N}(r,\frac{1}{f-a_{j}})+S(r,f), r\not\in E,$$
 for all r outside of a possible exceptional set E with finite logarithmic measure.
\end{lemma}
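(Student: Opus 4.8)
The statement is the Second Main Theorem for small (moving) targets with truncated counting functions, so the plan is to reduce the moving-target problem to a fixed-target one, apply a Cartan-type second main theorem, and absorb the coefficients coming from the small functions into $S(r,f)$ by the logarithmic-derivative estimates already at hand (the classical lemma on $m(r,a'/a)$ and its shift analogue, Lemma \ref{21l}). First I would normalize: if one $a_j$ equals $\infty$ I retain it as the pole term $\overline{N}(r,f)$, and a M\"obius change of variable with small-function coefficients lets me assume the remaining targets are finite and that three of them sit at convenient positions. This reduces the task to bounding $T(r,f)$ from above by the level-one zero-counting functions of the finitely many differences $f-a_j$.

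The core step is a Steinmetz-type reduction. I would associate to $f$ and the small functions $a_1,\dots,a_q$ a holomorphic curve $g:\mathbb{C}\to\mathbb{P}^n$ whose homogeneous coordinates are $\mathbb{C}$-linear combinations of products of $f$ with a basis of the finite-dimensional space of small functions generated by the $a_j$, arranged so that each difference $f-a_j$ is, up to a small-function multiplier, a moving hyperplane section $L_j(g)$, and so that the distinctness of the $a_j$ forces the $L_j$ into general position over the field of small functions. Applying Cartan's second main theorem for moving hyperplanes (in the Ru--Stoll form) to $g$ against the $L_j$ then yields an inequality of the shape $(q-n-1)\,T(r,g)\le\sum_{j} N^{(n)}(r,1/(f-a_j))+S(r,f)$, in which the Wronskian of the coordinate functions governs the ramification and the counting functions appear truncated to the dimension $n$ of the curve. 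Comparing $T(r,g)$ with $T(r,f)$ by the standard characteristic estimates recovers an inequality of the desired form, but with a constant weaker than $q-2$ and with truncation at level $n$ rather than at level one.

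The genuine obstacle --- and the reason the sharp statement is quoted from \cite{y3} rather than proved here --- is the simultaneous passage from the Cartan constant $q-n-1$ to the optimal $q-2-\epsilon$ and from truncation level $n$ down to level one, i.e.\ to $\overline{N}$. The naive reduction loses a term proportional to $n$, the number of derivatives needed to separate the small functions, so by itself it cannot reach $q-2$, and Cartan's theorem intrinsically truncates at the curve's dimension. Bridging both gaps requires Yamanoi's refined analysis of the Wronskian and of the ramification defect, together with the geometric estimates that produce the $\epsilon$-loss and the exceptional set $E$ of finite logarithmic measure. My plan would therefore be to carry the reduction explicitly through the Cartan stage and to invoke Yamanoi's estimates \cite{y3} for the final sharpening, reproving those being beyond the scope of a short argument.
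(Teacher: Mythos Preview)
The paper does not give a proof of this lemma at all: it is quoted verbatim from Yamanoi \cite{y3} and used as a black box. So there is no ``paper's own proof'' to compare against, and your task here is simply to cite the result, not to reprove it.

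That said, a brief comment on your sketch. You correctly recognise that the Steinmetz--Cartan reduction only yields a constant of the form $q-n-1$ with truncation at level $n$, and that closing the gap to $q-2-\epsilon$ with level-one truncation is the hard part. But your plan to ``invoke Yamanoi's estimates for the final sharpening'' is circular: those estimates \emph{are} the theorem you are trying to prove, and Yamanoi's actual argument does not proceed by first establishing a Cartan-type inequality and then refining it. His proof goes through Ahlfors' theory of covering surfaces and a delicate analysis of the geometry of the Riemann surface of $f$ over $\mathbb{P}^1$, which is structurally quite different from the Wronskian/curve approach you outline. So while your proposal accurately diagnoses why the classical moving-target machinery falls short, it does not constitute an independent proof strategy; in the context of this paper the correct move is just to cite \cite{y3}.
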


\

{\bf Remark 2} Lemma 2.8 is true when $\infty, a_{1}, a_{2}, \cdots,a_{q}\in \hat{S}(f)$ with $S(r,f)$ in our notation, in other words, even if exceptional sets are of infinite linear measure. but they are not of infinite logarithmic measure.

\begin{lemma}\label{2011} \cite{lh} Let $f$ and $g$ be two non-constant meromorphic functions. If $f$ and $g$ share $0,1,\infty$ IM, and $f$ is  a bilinear transformation of $g$,  then $f$ and $g$ assume one of the following six relations: (i) $fg=1$; (ii) $(f-1)(g-1)=1$; (iii) $f+g=1$; (iv) $f=cg$; (v) $f-1=c(g-1)$; (vi) $[(c-1)f+1][(c-1)g-c]=-c$, where $c\neq0,1$ is a complex number.
\end{lemma}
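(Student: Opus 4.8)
\emph{Proof sketch.} The hypothesis supplies a Möbius transformation $T(w)=(\alpha w+\beta)/(\gamma w+\delta)$ with $\alpha\delta-\beta\gamma\neq0$ and $f=T(g)$, and I may assume $f\not\equiv g$, since the identity map is excluded by the requirement $c\neq0,1$ and there is otherwise nothing to classify. The plan hinges on one elementary consequence of IM-sharing: if $g$ assumes a value $v\in\{0,1,\infty\}$ at a point $z_{0}$, then $z_{0}$ is a $v$-point of $g$ and hence of $f$, so $v=f(z_{0})=T(g(z_{0}))=T(v)$. Thus every member of $\{0,1,\infty\}$ that $g$ attains is a fixed point of $T$. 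Since $g$ is non-constant, a non-constant meromorphic function omits at most two values of $\hat{\mathbb{C}}$ (Picard), so at least one of $0,1,\infty$ is attained. I would then split into cases by how many of these three values $g$ attains.

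First I would dispose of the case where $g$ attains all three: then $T$ fixes three points, so $T=\mathrm{id}$ and $f=g$, which we have set aside. Next, if $g$ attains exactly two of them, $T$ fixes that pair and therefore lies in the one-parameter group of Möbius maps fixing those two points. Carrying this out for each pair gives precisely $f=cg$ for $\{0,\infty\}$, $f-1=c(g-1)$ for $\{1,\infty\}$, and relation (vi) for $\{0,1\}$, i.e. alternatives (iv)--(vi); the multiplier $c$ is the surviving parameter, and $c\neq0,1$ because $T$ is invertible and not the identity. A short consistency check confirms that the third, omitted value imposes no further restriction: sharing it merely forces $g$ to omit $T^{-1}$ of it as well, which is permitted as the second Picard exceptional value.

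The interesting case is when $g$ attains exactly one of $0,1,\infty$, say $\infty$ (the other two being symmetric). Then $g$ omits $0$ and $1$, and by Picard these are its only omitted values, while $T$ fixes $\infty$ and is hence affine. Here I would exploit IM-sharing of the two omitted values: since $g$ omits $0$, $f$ must omit $0$, i.e. $g$ must omit $T^{-1}(0)$, forcing $T^{-1}(0)\in\{0,1\}$; likewise $T^{-1}(1)\in\{0,1\}$. Injectivity of $T$ makes these preimages distinct, so $\{T^{-1}(0),T^{-1}(1)\}=\{0,1\}$, and the only choice with $T\neq\mathrm{id}$ is $T(0)=1$, $T(1)=0$, giving $T(w)=1-w$, that is $f+g=1$. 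The cases ``$0$ attained'' and ``$1$ attained'' run identically and yield $f=g/(g-1)$, i.e. $(f-1)(g-1)=1$, and $f=1/g$, i.e. $fg=1$, namely alternatives (ii) and (i).

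The main obstacle is not any single computation but the careful bookkeeping of omitted values through $T$: one must correctly convert ``$f$ and $g$ share $v$ IM'' into ``$g$ omits $v$ if and only if $g$ omits $T^{-1}(v)$,'' then invoke Picard to pin the omitted set down to at most two values and thereby trap the relevant preimages inside $\{0,1\}$. This rigidity is exactly what collapses the one-attained case to the three parameter-free transpositions (i)--(iii), in contrast with the one-parameter families (iv)--(vi) of the two-attained case, and it is also where the degenerate solution $T=\mathrm{id}$ must be isolated in order to justify the constraint $c\neq0,1$.
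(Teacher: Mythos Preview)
The paper does not prove this lemma; it is quoted from \cite{lh} and used as a black box, so there is no in-paper argument to compare against. Your sketch is self-contained and correct: the key mechanism---that any value $v\in\{0,1,\infty\}$ actually attained by $g$ is forced to be a fixed point of $T$, while sharing of an omitted value $v$ forces $T^{-1}(v)$ to lie in $g$'s Picard exceptional set---cleanly partitions the problem by the number of attained values and recovers exactly the six relations. The only places worth tightening in a full write-up are the ``consistency check'' in the two-attained case (you should state explicitly that the induced constraint lands on $g$, not on $c$, so $c$ remains a free parameter with $c\neq0,1$) and the remark that rational $g$ are automatically surjective, hence fall under the three-attained case and give $f\equiv g$; both are implicit in what you wrote.
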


\begin{lemma}\label{2010}\cite{g}
 Let $f$, $F$ and $g$ be three non-constant meromorphic functions, where $g=F(f)$. Then $f$ and $g$ share three values IM if and only if there exist an entire function $h$ such that,
 by a  suitable linear fractional transformation, one of the following cases holds: \\
 (i) $f\equiv g$;\\
 (ii) $f=e^{h}$ and $g=a(1+4ae^{-h}-4a^{2}e^{-2h})$ have three IM shared values $a\neq0$, $b=2a$ and $\infty$;\\
 (iii) $f=e^{h}$ and $g=\frac{1}{2}(e^{h}+a^{2}e^{-h})$ have three IM shared values $a\neq0$, $b=-a$ and $\infty$;\\
 (iv) $f=e^{h}$ and $g=a+b-abe^{-h}$ have three IM shared values $ab\neq0$ and $\infty$;\\
 (v) $f=e^{h}$ and $g=\frac{1}{b}e^{2h}-2e^{h}+2b$ have three IM shared values $b\neq0$, $a=2b$ and $\infty$;\\
 (vi) $f=e^{h}$ and $g=b^{2}e^{-h}$ have three IM shared values $a\neq0$, $0$ and $\infty$.
 \end{lemma}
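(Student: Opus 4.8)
For necessity I will treat the equivalence in the two obvious directions, and the ``if'' direction is only routine bookkeeping: in each of (i)--(vi) one substitutes the prescribed $f=e^{h}$ and the explicit $g$ and checks that $f-\alpha$ and $g-\alpha$ have the same zero set for each of the three listed values $\alpha$. For example, in case (iii) one computes $g-a=\frac{1}{2e^{h}}(e^{h}-a)^{2}$ and $g+a=\frac{1}{2e^{h}}(e^{h}+a)^{2}$, so the zeros of $g\mp a$ coincide exactly with those of $f\mp a$, while neither $f$ nor $g$ attains $\infty$; the other five cases are analogous. I therefore focus the plan on the necessity (``only if'') direction.

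The first thing I would record is the growth comparison forced by the hypothesis. Applying the second fundamental theorem (Lemma~\ref{28l}) to the three shared values of $f$ and of $g$ and using that, because the values are shared IM, the corresponding reduced counting functions of $f$ and $g$ coincide, I get $T(r,f)=O(T(r,g))$ and $T(r,g)=O(T(r,f))$ off an exceptional set, hence $S(r,f)=S(r,g)$. Since $g=F(f)$ with $F$ meromorphic, a transcendental $F$ would make $T(r,F(f))$ grow essentially faster than $T(r,f)$; the comparability just obtained therefore forces $F$ to be rational, so that $T(r,g)=(\deg F)\,T(r,f)+S(r,f)$. Feeding this back into the second fundamental theorem estimate, while counting the preimages of each shared value under $F$, then bounds the degree, and the plan is to conclude $\deg F\le 2$.

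The decisive step is the omitted-value analysis. Because $f$ and $g$ share each value $a_{i}$ IM and $g=F(f)$, at every point where $f=a_{i}$ one must have $g=F(a_{i})=a_{i}$, so each shared value that $f$ actually attains is a fixed point of $F$; moreover at any point where $f$ equals a non-fixed element of $F^{-1}(a_{i})$ the function $g$ would take the value $a_{i}$ while $f$ does not, which is impossible, so $f$ must omit all such preimages. Picard's theorem permits $f$ at most two omitted values, and tracing this constraint through the two cases $\deg F=1$ and $\deg F=2$ is exactly what produces the list. When $\deg F=1$, $F$ is a M\"obius map, so $f$ is a bilinear transformation of $g$ sharing three values IM and Lemma~\ref{2011} applies; after normalizing the shared values, its relations together with the omitted-value data can be rewritten in the forms (i), (iv) and (vi) with $f=e^{h}$. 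When $\deg F=2$, the requirement that the at most two non-fixed preimages be omitted forces $f$ to omit exactly two values, whence $f=e^{h}$ after a M\"obius normalization, and simultaneously pins $F$ to the Joukowski-type or quadratic normal forms yielding (ii), (iii) and (v); matching coefficients in $g=F(e^{h})$ then fixes the relations $b=2a$, $b=-a$ and $a=2b$ among the shared values. I expect the main obstacle to be precisely this degree-two bookkeeping: confirming that the omission constraint is compatible with exactly the three quadratic normal forms above and with no others, and correctly reading off the forced relations between the shared values from the explicit expression $g=F(e^{h})$.
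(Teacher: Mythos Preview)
The paper does not supply its own proof of this lemma: it is stated with the citation \cite{y1} and nothing more, apart from Remark~2 noting that the conclusion persists under ``IM almost'' sharing. There is therefore no argument in the present paper to compare your proposal against; the result is imported wholesale from Yang--Yi's monograph.

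As for the proposal itself, the overall strategy (growth comparison $\Rightarrow$ $F$ rational $\Rightarrow$ degree bound via omitted preimages and Picard $\Rightarrow$ case split into M\"obius and quadratic $F$) is the standard one and is broadly sound. One point to tighten: the sentence ``Feeding this back into the second fundamental theorem estimate \ldots\ bounds the degree, and the plan is to conclude $\deg F\le 2$'' is too optimistic as stated. The crude inequality
\[
(\deg F)\,T(r,f)+S(r,f)=T(r,g)\le\sum_{i=1}^{3}\overline{N}\Big(r,\frac{1}{g-a_i}\Big)+S(r,g)=\sum_{i=1}^{3}\overline{N}\Big(r,\frac{1}{f-a_i}\Big)+S(r,f)\le 3T(r,f)+S(r,f)
\]
only yields $\deg F\le 3$. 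The genuine restriction to $\deg F\le 2$ comes from the omitted-value bookkeeping you describe \emph{afterwards}: each shared value $a_i$ attained by $f$ must be a fixed point of $F$, and $f$ must omit every non-fixed element of $F^{-1}(\{a_1,a_2,a_3\})$, so Picard forces these non-fixed preimages to number at most two, which (after accounting for possible coincidences and multiplicities) is what actually excludes degree three. You should make that dependency explicit rather than presenting the degree bound as a consequence of the second main theorem alone. The degree-two classification into the three normal forms (ii), (iii), (v) is indeed the delicate part; if you intend to write it out in full, the cleanest route is to normalize so that $\infty$ is shared and then parametrize the quadratic $F$ by its two finite fixed points.
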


\begin{lemma}\label{2011} \cite{hk3,y1,y2}  Let $f$ and $g$ be two non-constant meromorphic functions, and let $\rho(f)$ and  $\rho(g)$ be the order of $f$ and $g$, respectively. Then
$\rho(fg)\leq \max\{\rho(f), \rho(g)\}$.
\end{lemma}

\

{\bf Remark 3} We can see from the proof that Lemma 2.9 \cite{lh} and Lemma 2,10 \cite{y1} are still true when $f$ and $g$ share three value IM almost.

\section{The proof of Theorem 1}
If $f_{c}\equiv f^{(k)}$, there is nothing to prove. Suppose $f_{c}\not\equiv f^{(k)}$. Since $f$ is a non-constant meromorphic function of $\rho_{2}(f)<1$,  $f_{c}$ and $f^{(k)}$ share $a,\infty$ CM, then  we get
\begin{align}
\frac{f^{(k)}-a}{f_{c}-a}=e^{h},
\end{align}
where $h$ is an entire function, and it is easy to know from (2.1) that $h=-p$.

Since  $f$ is a transcendental meromorphic function of $\rho_{2}(f)<1$ and $f^{(k)}$ and $f_{c}$ share  $ \infty$ CM, we can see from Lemma 2.1 and Lemma 2.3 that
\begin{eqnarray*}
\begin{aligned}
(1+o(1))N(r,f)+S(r,f)=N(r,f_{c})=N(r,f^{(k)}),
\end{aligned}
\end{eqnarray*}
which implies
\begin{eqnarray*}
\begin{aligned}
\overline{N}(r,f)=S(r,f).
\end{aligned}
\end{eqnarray*}
Furthermore, from the assumption that $f^{(k)}$ and $f_{c}$ share $a$ and $ \infty$ CM and $b$ IM, then by  Lemma 2.1, Lemma 2.8 and above equality, we get
\begin{eqnarray*}
\begin{aligned}
T(r,f_{c})&\leq \overline{N}(r,\frac{1}{f_{c}-a})+\overline {N}(r,\frac{1}{f_{c}-b})+\overline{N}(r,f_{c})+S(r,f)\\
&= \overline {N}(r,\frac{1}{f^{(k)}-a})+\overline {N}(r,\frac{1}{f^{(k)}-b})+S(r,f)\\
&\leq N(r,\frac{1}{f_{c}-f^{(k)}})+S(r,f)\leq T(r,f_{c}-f^{(k)})+S(r,f)\\
&\leq m(r,f_{c}-f^{(k)})+N(r,f_{c}-f^{(k)})+S(r,f)\\
&\leq m(r,f_{c})+m(r,1-\frac{f^{(k)}}{f_{c}})+N(r,f_{c})+S(r,f)\leq T(r,f_{c})+S(r,f).
\end{aligned}
\end{eqnarray*}

That is
\begin{eqnarray}
T(r,f_{c})=\overline {N}(r,\frac{1}{f_{c}-a})+\overline {N}(r,\frac{1}{f_{c}-b})+S(r,f).
\end{eqnarray}

By (3.1) and (3.2) we have
\begin{eqnarray}
T(r,f_{c})=T(r,f_{c}-f^{(k)})+S(r,f)=N(r,\frac{1}{f_{c}-f^{(k)}})+S(r,f).
\end{eqnarray}
and by Lemma 2.1,
\begin{align}
&T(r,e^{h})=m(r,e^{h})=m(r,\frac{f^{(k)}-a_{-c}^{(k)}+a_{-c}^{(k)}-a}{f_{c}-a})\leq m(r,\frac{a_{-c}^{(k)}-a}{f_{c}-a})\notag\\
&+ m(r,\frac{f^{(k)}-a_{-c}^{(k)}}{f^{(k)}_{c}-a^{(k)}})+m(r,\frac{f^{(k)}_{c}-a^{(k)}}{f_{c}-a})\leq m(r,\frac{1}{f_{c}-a})+S(r,f).
\end{align}
Then it follows from (3.1) and (3.3) that
\begin{align}
&m(r,\frac{1}{f_{c}-a})=m(r,\frac{e^{h}-1}{f^{(k)}-f_{c}})\notag\\
&\leq m(r,\frac{1}{f^{(k)}-f_{c}})+m(r,e^{h}-1)\notag\\
&\leq T(r,e^{h})+S(r,f).
\end{align}
Then by (3.4) and (3.5)
\begin{align}
T(r,e^{h})= m(r,\frac{1}{f_{c}-a})+S(r,f).
\end{align}
On the other hand, (3.1) can be rewritten as
\begin{align}
\frac{f^{(k)}-f_{c}}{f_{c}-a}=e^{h}-1,
\end{align}
which implies
\begin{align}
\overline{N}(r,\frac{1}{f_{c}-b})\leq \overline{N}(r,\frac{1}{e^{h}-1})+S(r,f)=T(r,e^{h})+S(r,f).
\end{align}
Thus, by (3.2), (3.6) and (3.8)
\begin{eqnarray*}
\begin{aligned}
m(r,\frac{1}{f_{c}-a})+N(r,\frac{1}{f_{c}-a})&= \overline{N}(r,\frac{1}{f_{c}-a})+\overline{N}(r,\frac{1}{f_{c}-b})+S(r,f)\\
&\leq \overline{N}(r,\frac{1}{f_{c}-a})+\overline{N}(r,\frac{1}{e^{h}-1})+S(r,f)\\
&\leq\overline{N}(r,\frac{1}{f_{c}-a})+m(r,\frac{1}{f_{c}-a})+S(r,f),
\end{aligned}
\end{eqnarray*}
which implies
\begin{align}
N(r,\frac{1}{f_{c}-a})=\overline{N}(r,\frac{1}{f_{c}-a})+S(r,f).
\end{align}
And then
\begin{align}
\overline{N}(r,\frac{1}{f_{c}-b})=T(r,e^{h})+S(r,f).
\end{align}
Set
\begin{eqnarray}
\varphi=\frac{L(f_{c})(f_{c}-f^{(k)})}{(f_{c}-a)(f_{c}-b)},
\end{eqnarray}
and
\begin{eqnarray}
\psi=\frac{L(f^{(k)})(f_{c}-f^{(k)})}{(f^{(k)}-a)(f^{(k)}-b)}.
\end{eqnarray}
 It is easy to know that $\varphi\not\equiv0$ because of Lemma 2.5 and $f\not\equiv f^{(k)} $. We know that $N(r,\varphi)\leq \overline{N}(r,f)=S(r,f)$ by (3.11). By Lemma 2.1 and Lemma 2.6 we have
\begin{eqnarray*}
\begin{aligned}
T(r,\varphi)&=m(r,\varphi)+N(r,\varphi)=m(r,\frac{L(f_{c})(f_{c}-f^{(k)})}{(f_{c}-a)(f_{c}-b)})+S(r,f)\notag\\
&\leq m(r,\frac{L(f_{c})f_{c}}{(f_{c}-a)(f_{c}-b)})+m(r,1-\frac{f^{(k)}}{f_{c}})+S(r,f)=S(r,f),
\end{aligned}
\end{eqnarray*}
that is
\begin{align}
T(r,\varphi)=S(r,f).
\end{align}
Let $d=a-j(a-b)(j\neq0,1)$. Obviously, by Lemma 2.1 and Lemma 2.6, we obtain
\begin{align}
 m(r,\frac{1}{f_{c}})&=m(r,\frac{1}{(b-a)\varphi}(\frac{L(f_{c})}{f_{c}-a}-\frac{L(f_{c})}{f_{c}-b})(1-\frac{f^{(k)}}{f_{c}}))\notag\\
&\leq m(r,\frac{1}{\varphi})+m(r,\frac{L(f_{c})}{f_{c}-a}-\frac{L(f_{c})}{f_{c}-b})\notag\\
&+m(r,1-\frac{f^{(k)}}{f_{c}})+S(r,f)=S(r,f).
\end{align}
and
\begin{align}
m(r,\frac{1}{f_{c}-d})&=m(r,\frac{L(f_{c})(f_{c}-f^{(k)})}{\varphi (f_{c}-a)(f_{c}-b)(f_{c}-d)})\notag\\
&\leq m(r,1-\frac{f^{(k)}}{f_{c}})+m(r,\frac{L(f_{c})f_{c}}{(f_{c}-a)(f_{c}-b)(f_{c}-d)})\notag\\
&+S(r,f)=S(r,f).
\end{align}
Set
\begin{align}
\phi=\frac{L(f_{c})}{(f_{c}-a)(f_{c}-b)}-\frac{L(f^{(k)})}{(f^{(k)}-a)(f^{(k)}-b)}.
\end{align}
We  discuss  two cases.\\

Case 1\quad $\phi\equiv0$.  Integrating the both sides of (3.16) which leads to
\begin{align}
\frac{f_{c}-a}{f_{c}-b}=C\frac{f^{(k)}-a}{f^{(k)}-b},
\end{align}
where $C$ is a nonzero constant.
Then by Lemma 2.7 we get
\begin{eqnarray}
2T(r,f_{c})=\overline{N}(r,\frac{1}{f_{c}-a})+\overline{N}(r,\frac{1}{f_{c}-b})+S(r,f),
\end{eqnarray}
which contradicts with (3.2).

Case 2 \quad $\phi \not\equiv0$. By (3.3), (3.13) and (3.16) we can obtain
\begin{align}
T(r,f_{c})&=T(r,f_{c}-f^{(k)})+S(r,f)=T(r,\frac{\phi(f_{c}-f^{(k)})}{\phi})+S(r,f)\notag\\
&=T(r,\frac{\varphi-\psi}{\phi})+S(r,f)\leq T(r,\varphi-\psi)+T(r,\phi)+S(r,f)\notag\\
&\leq T(r,\psi)+T(r,\phi)+S(r,f)\leq T(r,\psi)+\overline{N}(r,\frac{1}{f_{c}-b})+S(r,f).
\end{align}
On the other hand,
\begin{align}
T(r,\psi)&=T(r,\frac{L(f^{(k)})(f_{c}-f^{(k)})}{(f^{(k)}-a)(f^{(k)}-b)})\notag\\
&=m(r,\frac{L(f^{(k)})(f_{c}-f^{(k)})}{(f^{(k)}-a)(f^{(k)}-b)})+N(r,\psi)\notag\\
&\leq m(r,\frac{L(f^{(k)})}{f^{(k)}-b})+m(r,\frac{f_{c}-f^{(k)}}{f^{(k)}-a})+\overline{N}(r,f)+S(r,f)\notag\\
&\leq m(r,\frac{1}{f_{c}-a})+S(r,f)=\overline{N}(r,\frac{1}{f_{c}-b})+S(r,f).
\end{align}
Hence combining  (3.19) and (3.20), we obtain
\begin{align}
 T(r,f_{c})\leq 2\overline{N}(r,\frac{1}{f_{c}-b})+S(r,f).
\end{align}
If $a^{(k)}_{-c}\equiv a$, then by (3.1) and Lemma 2.1 we can get
\begin{align}
&T(r,e^{h})= m(r,e^{h})=m(r,\frac{f^{(k)}-a^{(k)}_{-c}}{f_{c}-a})\notag\\
&\leq m(r,\frac{f^{(k)}-a^{(k)}_{-c}}{f_{c}^{(k)}-a^{(k)}})+m(r,\frac{f_{c}^{(k)}-a^{(k)}}{f_{c}-a})=S(r,f).
\end{align}
It follows from (3.10), (3.21), (3.22) and Lemma 2.3 that $T(r,f)=T(r,f_{c})+S(r,f)=S(r,f)$. It's impossible.

If $a^{(k)}_{-c}\equiv b$, then by (3.10), (3.21) and and Lemma 2.1,
\begin{eqnarray*}
\begin{aligned}
T(r,f_{c})&\leq m(r,\frac{1}{f_{c}-a})+\overline{N}(r,\frac{1}{f^{(k)}-b})+S(r,f)\\
&\leq m(r,\frac{f^{(k)}-a_{-c}^{(k)}}{f^{(k)}_{c}-a^{(k)}})+m(r,\frac{f^{(k)}_{c}-a^{(k)}}{f_{c}-a})+m(r,\frac{1}{f^{(k)}-b})\\
&+\overline{N}(r,\frac{1}{f^{(k)}-b})+S(r,f)\leq T(r,f^{(k)})+S(r,f),
\end{aligned}
\end{eqnarray*}
which implies
\begin{align}
T(r,f_{c})\leq T(r,f^{(k)})+S(r,f).
\end{align}
Lemma 2.3 implies
\begin{align}
 T(r,f^{(k)})\leq T(r,f)+k\overline{N}(r,f)+S(r,f)=T(r,f_{c})+S(r,f),
\end{align}
and it follows from the fact $f_{c}$ and $f^{(k)}$ share $a$ CM and $b$ IM, (3.2) and (3.23) that
\begin{align}
 T(r,f^{(k)})&=T(r,f_{c})+S(r,f)\notag\\
& =\overline{N}(r,\frac{1}{f_{c}-a})+\overline{N}(r,\frac{1}{f_{c}-b})+S(r,f)\notag\\
 & =\overline{N}(r,\frac{1}{f^{(k)}-a})+\overline{N}(r,\frac{1}{f^{(k)}-b})+S(r,f).
  \end{align}
By  Lemma 2.1, Lemma 2.8, (3.2) and (3.25), we have
\begin{eqnarray*}
\begin{aligned}
2T(r,f^{(k)})&\leq\overline{N}(r,\frac{1}{f^{(k)}-a})+\overline{N}(r,\frac{1}{f^{(k)}-b})+\overline{N}(r,\frac{1}{f^{(k)}-d})+\overline{N}(r,f^{(k)})\\
&+S(r,f)\leq 2T(r,f^{(k)})-m(r,\frac{1}{f^{(k)}-d})+S(r,f)
\end{aligned}
\end{eqnarray*}
Immediately,
\begin{eqnarray}
m(r,\frac{1}{f^{(k)}-d})=S(r,f).
\end{eqnarray}

By the First Fundamental Theorem, Lemma 2.1, Lemma 2.2,  (3.14), (3.25), (3.26) and  $f$ is a transcendental meromorphic function of $\rho_{2}(f)<1$, we obtain
\begin{eqnarray*}
\begin{aligned}
m(r,\frac{f_{c}-d}{f^{(k)}-d})&\leq m(r,\frac{f_{c}}{f^{(k)}-d})+m(r,\frac{d}{f^{(k)}-d})+O(1)\\
&\leq T(r,\frac{f_{c}}{f^{(k)}-d})-N(r,\frac{f_{c}}{f^{(k)}-d})+S(r,f)\\
&=m(r,\frac{f^{(k)}-d}{f_{c}})+N(r,\frac{f^{(k)}-d}{f_{c}})-N(r,\frac{f_{c}}{f^{(k)}-d})+S(r,f)\\
&\leq N(r,\frac{1}{f_{c}})-N(r,\frac{1}{f^{(k)}-d})+N(r,f^{(k)})-N(r,f)+S(r,f)\\
&=T(r,\frac{1}{f_{c}})-T(r,\frac{1}{f^{(k)}-d})+S(r,f)\\
&=T(r,f_{c})-T(r,f^{(k)})+S(r,f)=S(r,f).
\end{aligned}
\end{eqnarray*}

Thus
\begin{eqnarray}
m(r,\frac{f_{c}-d}{f^{(k)}-d})=S(r,f).
\end{eqnarray}
It's easy to see that $N(r,\psi)=S(r,f)$ and (3.12) can be rewritten as
\begin{eqnarray}
\psi=[\frac{a-d}{a-b}\frac{L(f^{(k)})}{f^{(k)}-a}-\frac{b-d}{a-b}\frac{L(f^{(k)})}{f^{(k)}-b}][\frac{f_{c}-d}{f^{(k)}-d}-1].
\end{eqnarray}
Then by Lemma 2.6, (3.27) and (3.28) we can get
\begin{eqnarray}
T(r,\psi)=m(r,\psi)+N(r,\psi)=S(r,f).
\end{eqnarray}
By (3.2), (3.19) and (3.29) we get
\begin{eqnarray}
\overline{N}(r,\frac{1}{f_{c}-a})=S(r,f).
\end{eqnarray}
Moreover, by Lemma 2.1, (3.2), (3.25) and (3.30), we have
\begin{eqnarray}
m(r,\frac{1}{(f_{c}-a)^{(k)}})=m(r,\frac{1}{f^{(k)}_{c}-b_{c}})=m(r,\frac{1}{f^{(k)}-b})+S(r,f)=S(r,f),
\end{eqnarray}
and it follows from above, (3.6) and  (3.10) that
\begin{align}
 &\overline{N}(r,\frac{1}{f_{c}-b})=m(r,\frac{1}{f_{c}-a})+S(r,f)\notag\\
 &\leq m(r,\frac{1}{(f_{c}-a)^{(k)}})+m(r,\frac{(f_{c}-a)^{(k)}}{f_{c}-a})+S(r,f)=S(r,f).
\end{align}
Then by (3.2), (3.30), (3.32) and Lemma 2.3, we obtain
\begin{align}
T(r,f)&=T(r,f_{c})+S(r,f)=\overline{N}(r,\frac{1}{f_{c}-a})\notag\\
&+\overline{N}(r,\frac{1}{f_{c}-b})+S(r,f)=S(r,f),
\end{align}
which implies $T(r,f)=S(r,f)$, a contradiction.\\

So by (3.6), (3.10), (3.21), the First Fundamental Theorem, and Lemma 2.8 we can get
\begin{eqnarray*}
\begin{aligned}
T(r,f_{c})&\leq 2m(r,\frac{1}{f_{c}-a})+S(r,f)\leq2m(r,\frac{1}{f^{(k)}-a_{-c}^{(k)}})\\
&+S(r,f)=2T(r,f^{(k)})-2N(r,\frac{1}{f^{(k)}-a_{-c}^{(k)}})+S(r,f)\\
&\leq\overline{N}(r,\frac{1}{f^{(k)}-a})+\overline{N}(r,\frac{1}{f^{(k)}-b})+\overline{N}(r,\frac{1}{f^{(k)}-a_{-c}^{(k)}})\\
&+\overline{N}(r,f^{(k)})-2N(r,\frac{1}{f^{(k)}-a_{-c}^{(k)}})+S(r,f)\\
&\leq T(r,f_{c})-N(r,\frac{1}{f^{(k)}-a_{-c}^{(k)}})+S(r,f),
\end{aligned}
\end{eqnarray*}
which implies that
\begin{align}
N(r,\frac{1}{f^{(k)}-a_{-c}^{(k)}})=S(r,f).
\end{align}
Consequently, Lemma 2.1 and Lemma 2.3 can deduce
$$N(r,\frac{1}{f^{(k)}-a_{-c}^{(k)}})=N(r,\frac{1}{f_{c}^{(k)}-a^{(k)}})=S(r,f).$$
Then applying Lemma 2.4, we have  $T(r,e^{h})=T(r,e^{p})+O(1)=S(r,f)$, and it follows from (3.10) and (3.21) we can get $T(r,f)=T(r,f_{c})+S(r,f)=S(r,f)$, a contradiction.
This completes the proof of Theorem 1.

\section{The Proof of Theorem 2}
 If $f(z)\equiv f(z+c)$, there is nothing to do. Assume that $f(z)\not\equiv f(z+c)$. Since $f(z)$ is a transcendental meromorphic function of $\rho_{2}(f)<1$, $f$ and $f(z+c)$ share $a(z),\infty$ CM, then there is a nonzero entire function $p(z)$  of order less than $1$ such that
\begin{eqnarray}
\frac{f(z+c)-a(z)}{f(z)-a(z)}=e^{p(z)},
\end{eqnarray}
then by Lemma 2.1 and $a(z)$ is a periodic function with period $c$,
\begin{eqnarray}
T(r,e^{p})=m(r,e^{p})=m(r,\frac{f(z+c)-a(z+c)}{f(z)-a(z)})=S(r,f).
\end{eqnarray}
On the other hand, (4.1) can be rewritten as
\begin{eqnarray}
\frac{f(z+c)-f(z)}{f(z)-a(z)}=e^{p(z)}-1,
\end{eqnarray}
and then we get
\begin{eqnarray}
\overline{N}(r,\frac{1}{f(z)-b(z)})\leq N(r,\frac{1}{e^{p(z)}-1})=S(r,f).
\end{eqnarray}

Denote $N_{(m,n)}(r,\frac{1}{f(z)-b(z)})$ by the  zeros of $f(z)-b(z)$ with multiplicities $m$ and the zeros of $f_{c}(z)-b(z)$ with multiplicities $n$, where $m,n$ are two positive integers.  Thus, we can obtain
\begin{align}
&N(r,\frac{1}{f(z)-b(z)})=\sum_{k=2}^{n}N_{(1,k)}(r,\frac{1}{f(z)-b(z)})+\sum_{l=2}^{m}N_{(l,1)}(r,\frac{1}{f(z)-b(z)})\notag\\
&+\sum_{l=2}^{m}\sum_{k=2}^{n}N_{(l,k)}(r,\frac{1}{f(z)-b(z)})\leq \overline{N}(r,\frac{1}{f(z)-b(z)})+m\overline{N}(r,\frac{1}{f(z+c)-b(z)})\notag\\
&+N(r,\frac{1}{e^{p(z)}-1})\leq (m+1)\overline{N}(r,\frac{1}{f(z)-b(z)})+S(r,f)=S(r,f),
\end{align}
that is
\begin{eqnarray}
N(r,\frac{1}{f(z+c)-b(z+c)})=N(r,\frac{1}{f(z)-b(z)})=S(r,f).
\end{eqnarray}
Similarly, we also have
\begin{eqnarray}
N(r,\frac{1}{f(z+c)-b(z)})=S(r,f).
\end{eqnarray}

Set
\begin{eqnarray}
\psi(z)=\frac{f(z+c)-b(z+c)}{f(z)-b(z)}.
\end{eqnarray}
It is easy to see that
\begin{eqnarray}
N(r,\frac{1}{\psi(z)})\leq N(r,\frac{1}{f(z+c)-b(z+c)})+N(r,b(z))= S(r,f),
\end{eqnarray}
\begin{eqnarray}
N(r,\psi(z))\leq N(r,\frac{1}{f(z)-b(z)})+N(r,b(z))= S(r,f).
\end{eqnarray}
Hence by Lemma 2.1 and above,
\begin{align}
T(r,\psi(z))&=m(r,\psi(z))+N(r,\psi(z))=S(r,f)
\end{align}
According to (4.1) and (4.8),we have
\begin{eqnarray}
(e^{p(z)}-\psi(z))f(z)+\psi(z)b(z)+a(z)-b(z+c)-a(z)e^{p(z)}\equiv0.
\end{eqnarray}
We discuss following two cases.\\

{\bf Case 1} \quad $e^{p(z)}\not\equiv\psi(z)$. Then by (4.2), (4.11) and (4.12) we obtain $T(r,f)=S(r,f)$, a contradiction.\\

{\bf Case 2} \quad $e^{p(z)}\equiv\psi(z)$. Then by (4.1) we have
\begin{eqnarray}
f(z+c)=e^{p(z)}(f(z)-a(z))+a(z),
\end{eqnarray}
and
\begin{eqnarray}
N(r,\frac{1}{f(z+c)-b(z)})=N(r,\frac{1}{f(z)-a(z)+\frac{a(z)-b(z)}{e^{p(z)}}})=S(r,f).
\end{eqnarray}
If $b(z)$ is a periodic function of period $c$, then by (4.12) we can get $e^{p(z)}\equiv1$, which implies $f(z)\equiv f(z+c)$, a contradiction. Obviously, $a(z)-\frac{a(z)-b(z)}{e^{p(z)}}\not\equiv a(z)$. Otherwise, we can deduce $a(z)\equiv b(z)$, a contradiction.\\

Next, we discuss three Subcases.

{\bf Subcase 2.1}\quad $a(z)-\frac{a(z)-b(z)}{e^{p(z)}}\not\equiv b(z)$ and $a(z)-\frac{a(z)-b(z)}{e^{p(z)}}\not\equiv b(z-c)$. Then according to (4.6), (4.7),(4.14) and Lemma 2.8, we can get
\begin{align}
T(r,f(z))&\leq \overline{N}(r,\frac{1}{f(z)-a(z)-\frac{a(z)-b(z)}{e^{p(z)}}})+\overline{N}(r,\frac{1}{f(z)-b(z)})\notag\\
&+\overline{N}(r,\frac{1}{f(z)-b(z-c)})+S(r,f)=S(r,f),
\end{align}
that is $T(r,f(z))=S(r,f)$, a contradiction.\\

{\bf Subcase 2.2}\quad $a(z)-\frac{a(z)-b(z)}{e^{p(z)}}\equiv b(z)$, but $a(z)-\frac{a(z)-b(z)}{e^{p(z)}}\not\equiv b(z-c)$. It follows that $e^{p(z)}\equiv1$. Therefore by (4.1) we have $f(z)\equiv f(z+c)$, a contradiction.

{\bf Subcase 2.3}\quad $a(z)-\frac{a(z)-b(z)}{e^{p(z)}}\equiv b(z)$,  $a(z)-\frac{a(z)-b(z)}{e^{p(z)}}\equiv b(z-c)$. It follows that $e^{p(z)}\equiv1$. Therefore by (4.1) we have $f(z)\equiv f(z+c)$, a contradiction.

{\bf Subcase 2.4}\quad $a(z)-\frac{a(z)-b(z)}{e^{p(z)}}\not\equiv b(z)$ and $a(z)-\frac{a(z)-b(z)}{e^{p(z)}}\equiv b(z-c)$. It is easy to see that
\begin{eqnarray}
\frac{a(z)-b(z)}{a(z-c)-b(z-c)}=e^{p(z)}.
\end{eqnarray}
Furthermore, (4.12) implies
\begin{eqnarray}
\frac{a(z+c)-b(z+c)}{a(z)-b(z)}=e^{p(z)},
\end{eqnarray}
\begin{eqnarray}
\frac{a(z)-b(z)}{a(z-c)-b(z-c)}=e^{p(z-c)}.
\end{eqnarray}
It follows from (4.16) and (4.18) that
\begin{eqnarray}
e^{p(z)}=e^{p(z+c)}.
\end{eqnarray}
By (4.1), (4.8) and (4.19), we know that $f(z)$ and $f(z+nc)$ share $a(z)$ and $\infty$ CM, so we set
\begin{align}
F(z)=\frac{f(z)-a(z)}{b(z)-a(z)}, \quad G(z)=\frac{f(z+nc)-a(z)}{b(z+nc)-a(z+nc)}.
\end{align}
Since $f(z)$ and $f(z+nc)$ share $a(z)$ and $\infty$ CM, and $(b(z),b(z+nc)$ CM, so $F(z)$ and $G(z)$ share $0,\infty$ CM almost, and $1$ CM almost. We claim that $F$ is not a bilinear transform of $G$. Otherwise, we can see from Lemma 2.9 that if (i) occurs, we have $N(r,f(z))=N(r,F(z))+S(r,f)=S(r,f)$, then by Remark 1 and Theorem G, we get $f(z)\equiv f(z+c)$, a contradiction.

If (ii) occurs, we have $N(r,f(z))=N(r,F(z))+S(r,f)=S(r,f)$, then by Remark 1 and Theorem G, we get $f(z)\equiv f(z+c)$, a contradiction.

If (iii) occurs, we have
\begin{align}
N(r,\frac{1}{f(z)-a(z)})=S(r,f ), \quad N(r,\frac{1}{f(z)-b(z)})=S(r,f).
\end{align}
Then it follows from above, $a(z)-\frac{a(z)-b(z)}{e^{p(z)}}\not\equiv a(z)$, $a(z)-\frac{a(z)-b(z)}{e^{p(z)}}\not\equiv b(z)$ and Lemma 2.8 that  $T(r,f)=S(r,f)$, a contradiction.\\

If (iv) occurs, we have $F(z)\equiv jG(z)$, that is
\begin{align}
\frac{b(z+nc)-a(z+nc)}{b(z)-a(z)}=j(\frac{f(z+nc)-a(z)}{f(z)-a(z)}),
\end{align}
where $j\neq0,1$ is a finite constant. Then it follows from above, (4.17) and (4.19) that $e^{np(z)}=je^{np(z)}$, therefore we have $j=1$, a contradiction.

If (v) occurs, we have
\begin{align}
N(r,\frac{1}{f(z)-a(z)})=S(r,f ).
\end{align}
Then by Lemma 2.8, (4.7), (4.14) and $b(z-c)\not\equiv a(z)$, we obtain $T(r,f)=S(r,f)$, a contradiction.\\

If (vi) occurs, we have
\begin{align}
 N(r,f(z))=N(r,F(z))+S(r,f)=S(r,f),
\end{align}
and hence  we can see from Theorem G and Remark 1  that $f(z)\equiv f(z+c)$, a contradiction.\\

Therefore, $F(z)$ is not a linear fraction transformation of $G(z)$.   If $b(z)$ is a small function with period $nc$, that is $b(z+(n-1)c)\equiv b(z-c)$, we can set
\begin{eqnarray*}
\begin{aligned}
D(z)&=(f(z)-b(z))(b(z+nc)-b(z+(n-1)c))\notag\\
&-(f(z+nc)-b(z+nc))(b(z)-b(z-c))\notag\\
&=(f(z)-b(z-c))(b(z+nc)-b(z+(n-1)c))\notag\\
&-(f(z+nc)-b(z+(n-1)c))(b(z)-b(z-c))
\end{aligned}
\end{eqnarray*}
If $D(z)\equiv0$, then we have $f(z+nc)-b(z-c)\equiv -(f(z)-b(z-c))$. And thus we know that  $f(z)$ and $f(z+nc)$ share $a(z), b(z-c)$ and $\infty$ CM. We suppose
\begin{align}
F_{1}(z)=\frac{f(z)-a(z)}{b(z-c)-a(z)}, G_{1}(z)=\frac{f(z+nc)-a(z)}{b(z-c)-a(z)}.
\end{align}
Then we know that $F_{1}(z)$ and $G_{1}(z)$ share $0,1,\infty$ CM almost and $G_{1}(z)=-F_{1}(z)$. So by Lemma 2.10, we will obtain either $N(r,f(z))=N(r,F_{1})+S(r,f)=S(r,f)$, but in this case, according to Theorem G and Remark 1, we  can deduce a contradiction. Or $F_{1}(z)=G_{1}(z)$, that is $f(z)\equiv f(z+nc)$. Therefore, we obtain $f(z)\equiv b(z-c)$, that is $T(r,f(z))=S(r,f)$, a contradiction.

Hence $D(z)\not\equiv0$, and by (4.7)-(4.8), (4.14) and Lemma 2.1, we have
\begin{align}
2T(r,f(z))&=m(r,\frac{1}{f(z)-b(z)})+m(r,\frac{1}{f(z)-b(z-c)})+S(r,f)\notag\\
&=m(r,\frac{1}{f(z)-b(z)}+\frac{1}{f(z)-b(z-c)})+S(r,f)\notag\\
&\leq m(r,\frac{D(z)}{f(z)-b(z)}+\frac{D(z)}{f(z)-b(z-c)})+m(r,\frac{1}{D(z)})+S(r,f)\notag\\
&\leq m(r,D)+N(r,D)\leq m(r,f(z))+N(r,f(z))+S(r,f)\notag\\
&= T(r,f)+S(r,f),
\end{align}
which implies $T(r,f)=S(r,f)$, a contradiction.

By (4.16) we have
\begin{align}
\frac{\Delta_{c}b(z)}{1-e^{p(z)}}+b(z)=a(z).
\end{align}
Combining (4.18) and the fact that $a(z)$ is a small function with period $c$, we can get
\begin{align}
\frac{\Delta_{c}b(z+c)}{1-e^{p(z)}}+b(z+c)=a(z).
\end{align}
According to (4.27) and (4.28), we obtain
\begin{align}
e^{p(z)}=\frac{b_{2c}(z)-b_{c}(z)}{\Delta_{c}b(z)}.
\end{align}
So if $\rho(b(z))<\rho(e^{p(z)})$, we can follows from (4.28) and Lemma 2.11 that
\begin{align}
\rho(e^{p(z)})=\rho(\frac{b_{2c}(z)-b_{c}(z)}{\Delta_{c}^{2}b(z)})\leq \rho(b(z))<\rho(e^{p(z)}),
\end{align}
which is a contradiction.

 If $\rho(b(z))<1$, we claim that $p(z)\equiv B$ is a non-zero constant. Otherwise, the order of right hand side of (4.28) is $0$, but the left hand side is $1$, which is impossible. Therefore, by (4.1) we know that $f(z+c)-a(z)=B(f(z)-a(z))$, and then by Lemma 2.10 we will get $N(r,f)=S(r,f)$, so by Theorem G and Remark 1 we can obtain $f(z)\equiv f(z+c)$, a contradiction.

This completes Theorem 2.

\

{\bf Acknowledgements} The author would like to thank to referee  for his helpful comments.


\begin{thebibliography}{00}

\bibitem{cf1}  Y. M. Chiang and  S. J. Feng, " On the Nevanlinna characteristic of $f(z+\eta) $ and difference equations in the complex plane", Ramanujan J. 16 (2008), no. 1,
105-129.

\bibitem{cf2}  Y. M. Chiang and  S. J. Feng, " On the growth of logarithmic differences, difference quotients and logarithmic derivatives of meromorphic functions",
Trans. Amer. Math. Soc. 361 (2009), 3767-3791.

\bibitem{g}  G. G. Gundersen, " Meromorphic Functions that Share Three or Four Values", J. London Math. Soc. 20 (1979), 457-466.

\bibitem{h1} R. G. Halburd and R. J. Korhonen,
 " Difference analogue of the lemma on the logaritheoremic derivative with applications to difference equations", J. Math. Anal. Appl. 314 (2006), no. 2, 477-487.

\bibitem{h2} R. G. Halburd and R. J. Korhonen,
 " Nevanlinna theory for the difference operator", Ann. Acad. Sci. Fenn. Math. 31 (2006), no. 2, 463--478.

\bibitem{h3} R. G. Halburd, R. J. Korhonen and K. Tohge, " Holomorphic curves with shift-invarant hyperplane preimages",
Trans. Am. Math. Soc. 366 (2014), no. 8, 4267-4298.

\bibitem{hkl} J. Heittokangas, R. Korhonen, I. Laine and J. Rieppo, "Uniqueness of meromorphic functions sharing values with their shifts", Complex Var. Elliptic Equ. 56 (2011), 81-92.

\bibitem{hkl1} J. Heittokangas, R. Korhonen, I. Laine and J. Rieppo, J. Zhang, " Value sharing results for shifts of meromorphic functions, and sufficient conditions for periodicity", J. Math. Anal. Appl. 355 (2009) 352-363.

\bibitem{hf} X. H. Huang and  M. L. Fang, "Unicity of Entire Functions Concerning their Shifts and Derivatives", Comput. Methods Funct. Theory 21 (2021), 523–532.

\bibitem{hk3} W. K. Hayman,"Meromorphic functions", Oxford Mathematical Monographs Clarendon Press, Oxford 1964.

\bibitem{lh} I. Lahiri and A. Sarkar, "On a uniqueness theorem of Tohge", Arch. Math. 84 (2005), 461–469.


\bibitem{ly1} I. Laine and C. C. Yang, "Clunie theorems for difference and q-difference polynomials", J. Lond. Math. Soc. 76 (2007), 556-566.


\bibitem{ly} X. M. Li and H. X. Yi ,  "Meromorphic functions sharing four values with their difference operators or shifts", Bull. Korean Math. Soc. 53 (2016), 1213–1235



\bibitem{ly2} I. Laine and C. C. Yang, "Value distribution of difference polynomials", Proc. Japan Acad. 83 (2007), 148-151.

\bibitem{ly3} P. Li and  C. C. Yang,  "Value sharing of an entire function and its derivatives", J. Math. Soc. Japan. 51 (1999), no. 7, 781-799.


\bibitem{ld1} K. Liu and X. J. Dong,"Some results related to complex differential-difference equations of certain types",Bull. Korean Math. Soc. 51 (2014), 1453-1467.

\bibitem{qly} X. G. Qi, N. Li and  L. Z. Yang, "Uniqueness of meromorphic functions concerning their differences and solutions of difference Painl$\acute{e}$ve equations", Comput. Methods Funct. Theory 18 (2018), 567-582.

\bibitem{qy} X. G. Qi and L. Z. Yang, "Uniqueness of meromorphic functions concerning their shifts and derivatives", Comput. Methods Funct. Theory. 20 (2020), no. 1, 159-178.

\bibitem{ruy} L. A. Rubel and C. C. Yang, "Values shared by an entire function and its derivative", Lecture Notes in Math. Springer, Berlin, 599 (1977), 101-103.

\bibitem{y1} C. C. Yang and H. X. Yi,
 "Uniqueness theory of meromorphic functions", Kluwer Academic Publishers Group, Dordrecht, 2003.

\bibitem{y2} L. Yang, "Value Distribution Theory", Springer-Verlag, Berlin, 1993.

\bibitem{y3} K, Yamanoi, "The second main theorem for small functions and related problems",  Acta Math. {\bf 192},  225-294 (2004)

\bibitem{zw} J. H. Zheng and S. P. Wang, "On Unicity Properties of Meromorphic Functions and Their Derivatives", Adv. in Math(China). 21 (1992), no. 3, 334-341.




\end{thebibliography}
\end{document}